\newtheorem{Thm}{Theorem}[section]
\newtheorem{Lem}[Thm]{Lemma}
\newtheorem{Pro}[Thm]{Proposition}
\newtheorem{Cor}[Thm]{Corollary}
\theoremstyle{definition}
\newtheorem{Exa}[Thm]{Example}
\theoremstyle{remark}
\newtheorem{Def}[Thm]{Definition}
\newcommand{\R}{\mathbb{R}}
\newcommand{\al}{\alpha}
\newcommand{\la}{\lambda}
\renewcommand{\phi}{\varphi}
\begin{document}

\title[Some solitons on almost coK\"{a}hler manifolds and AHM]{Nature of Some Solitons on Almost coK\"{a}hler Manifolds 
and Asymptotically Harmonic Manifolds}

\author[P. Ghosh]{Paritosh Ghosh}
\address{Department of Mathematics\\
Jadavpur University\\
Kolkata-700032, India.}
\email{paritoshghosh112@gmail.com}

\author[H. M. Shah]{Hemangi Madhusudan Shah}
\address{Harish-Chandra Research Institute\\
A CI of Homi Bhabha National Institute\\
Chhatnag Road, Jhunsi, Prayagraj-211019, India.}
\email{hemangimshah@hri.res.in}

\author[A. Bhattacharyya]{Arindam Bhattacharyya}
\address{Department of Mathematics\\
Jadavpur University\\
Kolkata-700032, India}
\email{bhattachar1968@yahoo.co.in}

\subjclass{53B40, 53B20, 53C25, 53D15}
\keywords{Almost coK\"{a}hler manifold, $(\kappa,\mu)$-almost coK\"{a}hler manifold, Einstein solitons, $\eta$-Einstein solitons, Gradient $\eta$-Einstein solitons, asymptotically harmonic manifold}

\begin{abstract}
In this research, we study the nature of $\eta$-Einstein and gradient $\eta$-Einstein soliton in the framework of almost coK\"{a}hler manifolds and $(\kappa,\mu)$-almost coK\"{a}hler manifolds. We find some expressions for scalar curvature of the almost coK\"{a}hler manifold admitting $\eta$-Einstein soliton in various cases. We also prove that if a $(\kappa,\mu)$-almost coK\"{a}hler manifold admits a gradient $\eta$-Einstein soliton, then either the manifold is coK\"{a}hler, or $N(\kappa)$-almost coK\"{a}hler, or the soliton is trivial.  We present an example which validates our results. Finally,
we investigate the asymptotically harmonic manifolds
admitting non-trivial Ricci solitons and show that 
they exhibit rigid behavior if for example, scalar 
curvature attains maximum.
\end{abstract}
\maketitle

\section{Introduction}
In recent years, the study of geometric flows draws a significant importance in the field of differential geometry. Catino and Mazzieri \cite{Cat} in 2016 first introduced the notion of Einstein soliton as a generalization of Ricci soliton. A Riemannian manifold $(M^n,g)$ is said to be an {\it Einstein soliton} if for some $\lambda\in\R$, there exists a vector field $V$ such that 
\begin{equation}\label{p2eq1}
     \mathfrak{L}_Vg+2S=(2\lambda+r)g
\end{equation}
holds, where $\mathfrak{L}_V$ is the Lie differention in the direction of the vector field $V$, $S$ is the Ricci tensor satisfies $S(X,Y)=g(QX,Y)$, $Q$ being the Ricci operator, $r$ is the Ricci curvature. $V$ is known as the {\it potential vector field}. The Einstein soliton, denoted by $(M,g,V,\lambda)$, is {\it steady} for $\lambda=0$, {\it expanding} for $\lambda<0$ and {\it shrinking} for $\lambda>0$. Einstein soliton creates some self-similar solutions of the Einstein flow $$\frac{\partial}{\partial t}g+2S=rg.$$

As a generalization of Einstein soliton the notion of $\eta$-Einstein soliton is introduced \cite{Abla} and it is given by
\begin{equation}\label{p2eq2}
     \mathfrak{L}_Vg+2S=(2\lambda+r)g+2\beta\eta\otimes\eta,
\end{equation}
where, $\beta$ is some constant and the tensor product $(\eta\otimes\eta)(X,Y)$ is given by $\eta(X)\eta(Y)$.\\
If the potential vector field $V=\nabla f$ is the gradient of some smooth function $f$, then the soliton is known as {\it gradient $\eta$-Einstein soliton} \cite{Abla}. $f$ is known as the {\it potential function}. From (\ref{p2eq2}), for $V=\nabla f$, we get the equation of gradient $\eta$-Einstein soliton
\begin{equation}\label{p2eq3}
    2\nabla^2f+2S=(2\lambda+r)g+2\beta\eta\otimes\eta.
\end{equation}

Recently many authors studied Ricci solitons on almost coK\"{a}hler manifolds like in \cite{Gang, Wang}. In this research, motivated by those work, we investigate the nature of $\eta$-Einstein solitons and gradient $\eta$-Einstein solitons on almost coK\"{a}hler manifolds and $(\kappa,\mu)$-almost coK\"{a}hler manifolds. In \cite{Peter}, the rigidity of gradient Ricci solitons is studied. Inspired by this work,
among other results,
we show that if there exists a non-trivial shrinking or steady Ricci soliton on asymptotically harmonic manifold, then the manifold is flat if its scalar curvature attains minimum. 

\par
In the Preliminary section, we discuss about almost coK\"{a}hler manifolds and $(\kappa,\mu)$-almost coK\"{a}hler manifolds. In section 2, we study Einstein soliton on almost coK\"{a}hler manifold with potential vector field pointwise collinear with the Reeb vector field and showed that the soliton is expanding, steady or shrinking respectively when $r>\|h\|^2$, $r=\|h\|^2$ or $r<\|h\|^2$. In Section 4, we study the scalar curvature of almost coK\"{a}hler manifold admitting $\eta$-Einstein soliton in various cases. In the last section we investigate $\eta$-Einstein soliton and gradient $\eta$-Einstein soliton on $(\kappa,\mu)$-almost cok\"{a}hler manifold and we prove the following theorem.
\begin{Thm}\label{p2t9}
    Let $(M^{2n+1},g)$, $n>1$, be a $(\kappa,\mu)$-almost coK\"{a}hler manifold which admits a gradient $\eta$-Einstein soliton. Then one of the following cases can happen:
    \begin{enumerate}
        \item $M$ is coK\"{a}hler, [when $\kappa=0$]
        \item $M$ is $N(\kappa)$-almost coK\"{a}hler, [when $\mu=0$]
        \item the potential function is constant, i.e., the soliton is trivial.
    \end{enumerate}
\end{Thm}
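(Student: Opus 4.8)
The plan is to convert the gradient $\eta$-Einstein soliton equation into a curvature identity and then play it against the two tensorial data defining a $(\kappa,\mu)$-almost coK\"ahler structure: the nullity condition prescribing $R(X,Y)\xi$ and the resulting explicit formula for the Ricci operator $Q$. First I collect the ingredients. Since $n>1$, the quantities $\kappa$ and $\mu$ are constants; moreover $\kappa\le 0$ and $h^{2}=\kappa\phi^{2}$, so $h\equiv 0$ precisely when $\kappa=0$, and in that case $M$ is coK\"ahler -- this is alternative (1). Hence from now on I assume $\kappa<0$, i.e.\ $h\neq0$. I shall also use the standard identities $\nabla_{X}\xi=-\phi hX$, $Q\xi=2n\kappa\,\xi$, $d\eta=0$, the known derivative formulas for $h$ (in particular for $\nabla_{\xi}h$), and the twice--contracted second Bianchi identity $\operatorname{div}Q=\tfrac12\nabla r$, with $r$ in fact constant here because $\kappa,\mu$ are constants and the $h$--terms appearing in $Q$ are trace-free.

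Rewrite equation (\ref{p2eq3}) as the first--order identity
\[
\nabla_{X}\nabla f=\tfrac{2\lambda+r}{2}\,X+\beta\,\eta(X)\,\xi-QX .
\]
Differentiating once more and antisymmetrizing in $X,Y$ -- the left side is then $R(X,Y)\nabla f$ because $\operatorname{Hess}f$ is symmetric -- and using $d\eta=0$ to cancel the term containing $(\nabla_{X}\eta)(Y)-(\nabla_{Y}\eta)(X)$, I obtain
\[
R(X,Y)\nabla f=\tfrac12\bigl(X(r)\,Y-Y(r)\,X\bigr)+\beta\bigl(\eta(Y)\nabla_{X}\xi-\eta(X)\nabla_{Y}\xi\bigr)-\bigl((\nabla_{X}Q)Y-(\nabla_{Y}Q)X\bigr).
\]
Since $r$ is constant the first term drops out; this identity is the computational engine.

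Next I specialize. Putting $Y=\xi$, the left side is evaluated from the $(\kappa,\mu)$-nullity condition (using $\eta(\nabla f)=\xi(f)$), while on the right $\nabla_{\xi}\xi=0$ turns the $\beta$-term into $-\beta\,\phi hX$ and $(\nabla_{X}Q)\xi-(\nabla_{\xi}Q)X$ is computed from $Q\xi=2n\kappa\xi$ together with the $h$-derivative formulas. Independently, contracting the displayed identity over $X$ and using $\operatorname{div}Q=\tfrac12\nabla r=0$ yields $Q\nabla f=0$, whence $\xi(f)=\tfrac{1}{2n\kappa}\,g(Q\nabla f,\xi)=0$ since $\kappa\neq0$; so $\nabla f$ lies in the contact distribution. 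Substituting the explicit Ricci operator of a $(\kappa,\mu)$-almost coK\"ahler manifold then collapses all of this to a single tensorial identity on the contact distribution whose leading part is a nonzero multiple of $\mu$ applied to a $(1,1)$-tensor built from $h$ and $\phi h$ -- nonzero because $h\neq0$ and $n>1$ -- plus a term linear in $\phi h(\nabla f)$.

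Finally I read off the trichotomy. If the $\mu$-term above cannot be absorbed, then $\mu=0$ and $M$ is $N(\kappa)$-almost coK\"ahler -- alternative (2). Otherwise $\mu\neq0$, and the same identity forces $\phi h(\nabla f)=0$; since $\phi h$ is invertible on the contact distribution (its eigenvalues there being $\pm\sqrt{-\kappa}\neq0$, as $h^{2}=\kappa\phi^{2}$ with $\kappa\neq0$), this gives $\nabla f=0$, so $f$ is constant and the soliton is trivial -- alternative (3). The main obstacle is the third step: correctly assembling $(\nabla_{X}Q)Y$ and its $X$-contraction, which needs the precise Ricci-operator formula and the evolution equations of $h$ on $(\kappa,\mu)$-almost coK\"ahler manifolds, kept consistent with the sign conventions for $\phi$, $h$ and $\nabla\xi$; and checking that the coefficient of the $\mu$-tensor is genuinely nonzero -- this is exactly where the hypothesis $n>1$ is used (it is also what forces $\kappa$ and $\mu$ to be constants in the first place).
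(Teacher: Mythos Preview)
Your strategy coincides with the paper's: derive the curvature identity for $R(X,Y)\nabla f$ from the soliton equation, contract it (using the second Bianchi identity and $r=2n\kappa=\mathrm{const}$) to obtain $Q\nabla f=0$, and then feed in the explicit Ricci operator $QX=\mu hX+2n\kappa\,\eta(X)\xi$ from Lemma~\ref{lem1} to split into cases. The case $\kappa=0$ and the extraction of $\xi(f)=0$ when $\kappa<0$ are handled exactly as in the paper.

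The one place you diverge is the endgame in Case~3. The paper, after obtaining $hDf=0$, computes $g(R(X,\xi)Df,\xi)$ two ways---once from the nullity condition and once from the curvature identity with $Y=\xi$---and compares them to force $Df=0$. Your idea of instead invoking the invertibility of $h$ (or $\phi h$) on $\ker\eta$ is correct and in fact shorter: since $h^{2}=-\kappa\,\mathrm{Id}$ on $\ker\eta$ with $\kappa<0$, $h$ is an isomorphism there, so $h\nabla f=0$ together with $\nabla f\in\ker\eta$ already gives $\nabla f=0$. But you reach $h\nabla f=0$ by a detour: the ``single tensorial identity'' obtained from the $Y=\xi$ specialization and the $\nabla Q$ computation is never written down, and it is not obvious that it isolates a clean $\mu$-term against a $\phi h(\nabla f)$-term as you claim. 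In fact the detour is unnecessary: $Q\nabla f=0$ with $\xi(f)=0$ already reads $\mu h\nabla f=0$ directly from the Ricci formula, and you are done. Finally, your remark about where $n>1$ enters is slightly off: it is not a coefficient check in your tensorial identity, but simply the hypothesis needed for Lemma~\ref{lem1} (the Ricci operator has a different form in dimension three).
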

\noindent
We conclude the section by giving an example to validate some of the results. Section $6$ is devoted to studying the nature of Ricci solitons on asymptotically harmonic manifolds.

\section{Preliminaries}

A Riemannian manifold $M^{2n+1}$ of dimension $2n+1$ $(n\geq1)$ is said to have an {\it almost contact structure} $(\phi, \xi, \eta)$,
if there exists a $(1,1)$-tensor field $\phi$, a vector field $\xi$ and a global $1$-form $\eta$ such that
\begin{eqnarray}\label{p2eq4}
  \phi^{2}X = -X+\eta(X)\xi,~~~~
  \eta(\xi) = 1
\end{eqnarray}
holds for all $X$ on $M^{2n+1}$. $\xi$ is the {\it Reeb vector field}. The manifold $M^{2n+1}$ with the almost contact structure $(\phi, \xi, \eta)$ is known as the almost contact manifold $M^{2n+1}(\phi, \xi, \eta)$ \cite{Bla}. If a Riemannian metric $g$ on almost contact manifold $M^{2n+1}(\phi, \xi, \eta)$ satisfies 
\begin{equation}\label{p2eq5}
  g(\phi X, \phi Y)=g(X, Y)-\eta(X)\eta(Y),
\end{equation}
for all $X$, $Y$ on $M^{2n+1}$, then $g$ is said to be {\it compatible} with the almost contact structure and $M^{2n+1}(\phi, \xi, \eta, g)$ is called {\it almost contact metric manifold} \cite{Bla}. A {\it fundamental 2-form}, denoted by $\Phi$, defined as $\Phi(X,Y)=g(X, \phi Y)$, for all $X$, $Y$ on $M^{2n+1}$. An almost contact metric manifold is said to be \textit{normal}, if the tensor field $N=[\phi,\phi]+2d\eta\otimes\xi$ vanishes everywhere on the manifold, where $[\phi,\phi]$ is the Nijenhuis tensor of $\phi$ \cite{Bla}. 

An almost contact metric manifold $M^{2n+1}(\phi, \xi, \eta, g)$ is called {\it almost coK\"{a}hler manifold} if both $\eta$ and $\Phi$ are closed, or equivalently $d\eta=0$ and $d\Phi=0$. According to D.E. Blair \cite{Bla}, (almost) coK\"{a}hler manifold and (almost) cosymplectic manifold are the same. We refer \cite{Dack, Ols, Ols1, Perrr, Shar} for more details on almost coK\"{a}hler manifolds. 

If $\nabla\phi=0$ or $\nabla\Phi=0$, or equivalently if the almost coK\"{a}hler manifold is normal, then the manifold is called {\it coK\"{a}hler manifold}.

Consider $(1,1)$-type tensor $h=\frac{1}{2}\mathfrak{L}_\xi\phi$ and $h'=h\circ\phi$. Then from \cite{Ols, Ols1}, both $h$ and $h'$ are symmetric and satisfies the following conditions:
\begin{eqnarray}\label{p2eq6}
    h\xi=0, ~~~~\operatorname{tr}(h)=0=\operatorname{tr}(h'),~~~~h\phi=-\phi h,
\end{eqnarray}
\begin{equation}\label{p2eq7}
    \nabla\xi=h', ~~~~\operatorname{div}\xi=0
\end{equation}
\begin{equation}\label{p2eq8}
    S(\xi,\xi)=-\|h\|^2,
\end{equation}
and
\begin{equation}\label{p2eq9}
(\nabla_X\eta)(Y)-(\nabla_Y\eta)(X)=0.
\end{equation}
The Riemannian curvature tensor $R$ is defined as
\begin{equation}\label{p2eq10}
    R(X,Y)Z=\nabla_X\nabla_YZ-\nabla_Y\nabla_XZ-\nabla_{[X,Y]}Z,
\end{equation}
for all $X$, $Y$, $Z$ on $M^{2n+1}$.

Recently many authors studied a spacial class of contact manifold named as $(\kappa, \mu)$-almost coK\"{a}hler manifold as a generalization of $K$-contact manifold and Sasakian manifold. We would refer \cite{Bla, Dack, Shar} for more studies.

An almost coK\"{a}hler manifold $M^{2n+1}(\phi, \xi, \eta, g)$ is said to be a {\it $(\kappa, \mu)$-almost coK\"{a}hler manifold} if for all $X$, $Y$ on $M^{2n+1}$, the Riemannian curvature tensor $R$ satisfies
\begin{equation}\label{p2e11}
    R(X,Y)\xi=\kappa[\eta(Y)X-\eta(X)Y]+\mu[\eta(Y)hX-\eta(X)hY],
\end{equation}
where $(\kappa,\mu)\in \mathbb{R}^2$.
In other words $\xi$ belongs to the generalised $(\kappa,\mu)$-nullity distribution. We have for $(\kappa,\mu)$-almost coK\"{a}hler manifold
\begin{equation}\label{p2e12}
    Q\xi=2n\kappa\xi,~~~~h^2=\kappa\phi^2.
\end{equation}
We recall a lemma which will be used in our work.
\begin{Lem}\label{lem1}\cite{Wang}
    Let $M^{2n+1}(\phi, \xi, \eta, g)$ be a $(\kappa, \mu)$-almost coK\"{a}hler manifold of dimension greater than 3 with $\kappa < 0$, then the Ricci operator is given by
    \begin{equation}\label{p2e13}
        QX=\mu hX+2n\kappa\eta(X)\xi,
    \end{equation}
where $\kappa$ is a non-zero constant and $\mu$ is a smooth function satisfying $d\mu\wedge\eta= 0$.
\end{Lem}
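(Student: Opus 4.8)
The plan is to determine $Q$ from the nullity condition (\ref{p2e11}) together with the algebraic and differential identities (\ref{p2eq6})--(\ref{p2e12}) of almost coK\"{a}hler geometry, splitting the work into the vertical part (along $\xi$), a constancy argument for $\kappa$ and $\mu$, and the horizontal part (on $\ker\eta$). First I would fix the algebraic picture. Since $\kappa<0$, the relation $h^2=\kappa\phi^2$ from (\ref{p2e12}) restricts on the distribution $\mathcal{D}=\ker\eta$ to $h^2=-\kappa\,\Id$, so the symmetric operator $h$ has eigenvalue $0$ on $\R\xi$ and $\pm\lambda$ with $\lambda=\sqrt{-\kappa}$ on $\mathcal{D}$; by $h\phi=-\phi h$ from (\ref{p2eq6}) the two eigendistributions $[\lambda],[-\lambda]$ are $n$-dimensional and interchanged by $\phi$. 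The vertical part of $Q$ is then immediate: tracing $S(Y,\xi)=\sum_i g(R(e_i,Y)\xi,e_i)$ against (\ref{p2e11}) over a $\phi$-adapted orthonormal frame, and using $\tr h=0$, $h\xi=0$, collapses every $\mu$-term and yields $S(Y,\xi)=2n\kappa\,\eta(Y)$, i.e. $Q\xi=2n\kappa\,\xi$, recovering (\ref{p2e12}).

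Next I would settle the constancy statements. Differentiating (\ref{p2e11}) covariantly and combining the result with the second Bianchi identity, after substituting $\nabla\xi=h'$ from (\ref{p2eq7}), produces a tensorial identity whose $\xi$-- and $\mathcal{D}$--components force $d\kappa=0$, so $\kappa$ is a non-zero constant, and simultaneously constrain $\mu$ to obey $d\mu\wedge\eta=0$, i.e. $X\mu=0$ for every $X\in\mathcal{D}$. It is precisely this Bianchi computation (not the definition, which a priori only admits constants) that allows $\mu$ to remain a function varying solely along $\xi$. Once $Q$ is known, the same conclusion on $\mu$ can be cross-checked from the contracted Bianchi identity $2\operatorname{div}Q=\nabla r$ together with $r=\tr Q=2n\kappa=\text{const}$.

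The heart of the proof is the horizontal part $QX$ for $X\in\mathcal{D}$. The nullity condition prescribes only $R(\cdot,\cdot)\xi$, and so does not by itself control the sectional curvatures inside $\mathcal{D}$. To recover them I would exploit the identity $R(X,Y)\xi=(\nabla_X h')Y-(\nabla_Y h')X$, which follows at once from $\nabla\xi=h'$ and the torsion-freeness of $\nabla$; matching it against (\ref{p2e11}) fixes the skew part of $\nabla h'$, while the almost coK\"{a}hler structure equations ($d\eta=0$, $d\Phi=0$, yielding the expressions for $\nabla\phi$ and $\nabla_\xi h$ in terms of $h$) supply the remaining symmetric part. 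With $\nabla h'$ determined on each eigendistribution, I would reassemble the horizontal components of $R$ and trace; the curvature terms combine so that only the $\mu h$ contribution survives on $\mathcal{D}$, giving $QX=\mu\lambda X=\mu hX$ on $[\lambda]$ and $QX=-\mu\lambda X=\mu hX$ on $[-\lambda]$. Combining this with the vertical computation yields $QX=\mu hX+2n\kappa\,\eta(X)\xi$.

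I expect the horizontal--Ricci computation to be the main obstacle: bridging from $R(\cdot,\cdot)\xi$ to the full curvature tensor requires the coK\"{a}hler structure equations and a careful, eigenspace-by-eigenspace bookkeeping of the second Bianchi identity, and it is only here that the hypothesis $\dim M>3$ (that is, $n>1$) genuinely enters, ruling out the low-dimensional degeneracies in which $[\lambda]$ and $[-\lambda]$ become one-dimensional. The constancy of $\kappa$ is the second delicate point, since it is genuinely differential (Bianchi) in nature rather than a consequence of the algebraic relation $h^2=\kappa\phi^2$ alone.
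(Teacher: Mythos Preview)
The paper does not supply a proof of this lemma at all: it is simply recalled from \cite{Wang} as background (``We recall a lemma which will be used in our work''), so there is no in-paper argument to compare your proposal against.

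That said, your outline is the standard route to (\ref{p2e13}) and is essentially correct. A couple of remarks. First, note that in the present paper the pair $(\kappa,\mu)$ is already declared to lie in $\R^2$ (see the line after (\ref{p2e11})), so within \emph{this} paper's framework the constancy of $\kappa$ and the condition $d\mu\wedge\eta=0$ are tautological; the Bianchi argument you describe is needed only if one starts, as in Wang's original setting, with $\kappa,\mu$ a priori functions and then proves these constraints. Second, your ``horizontal'' step is the genuinely nontrivial one, and your description is accurate but schematic: the concrete computation passes through the explicit formula for $(\nabla_X\phi)Y$ on an almost coK\"{a}hler manifold (equivalently, Olszak's structure equations) together with $R(X,Y)\xi=(\nabla_Xh')Y-(\nabla_Yh')X$, and then a curvature calculation on each eigenbundle $[\pm\lambda]$; the hypothesis $n>1$ enters exactly where you say, to avoid the rank-one degeneracy in which the eigendistributions carry no internal curvature information (compare with (\ref{p2e14}), which shows the $n=1$ Ricci operator has an extra scalar-curvature term). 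If you want to turn the sketch into a full proof you will need to write out $(\nabla_Xh)Y$ explicitly on the eigenbundles and carry the trace through; nothing in your plan is wrong, but the bookkeeping you flag as ``the main obstacle'' is indeed where all the content lies.
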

\noindent
Moreover, the expression for Ricci operator $Q$ for a $3$-dimensional non-coK\"{a}hler $(\kappa,\mu)$-almost coK\"{a}hler manifold can be given by
\begin{equation}\label{p2e14}
    QX=\bigg(\frac{r}{2}-\kappa\bigg)X+\bigg(3\kappa-\frac{r}{2}\bigg)\eta(X)\xi+\mu hX.
\end{equation}
We also need the following lemma afterwards.
\begin{Lem}\label{lem2}\cite{Per}
    An almost contact $3$-manifold is normal if and only if $h=0$.
\end{Lem}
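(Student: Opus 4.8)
The plan is to prove the two implications separately, using the structural identities (\ref{p2eq4})--(\ref{p2eq7}) of the almost coK\"{a}hler setting and reserving the three-dimensionality for the converse only. The cornerstone is the identity
\begin{equation*}
  [\phi,\phi](X,\xi)=2\phi hX ,
\end{equation*}
which I would obtain by a direct expansion of the Nijenhuis bracket: with $\phi\xi=0$ only the terms $\phi^2[X,\xi]-\phi[\phi X,\xi]$ survive, and rewriting $[\xi,\phi X]=\phi[\xi,X]+2hX$ (which is just $(\mathfrak{L}_\xi\phi)X=2hX$) makes the two $\phi^2$-contributions cancel, leaving $2\phi hX$. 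For the forward implication, assume $M$ is normal, so $N=[\phi,\phi]+2\,d\eta\otimes\xi=0$. Evaluating $N(X,\xi)=0$ and using $d\eta=0$ gives $\phi hX=0$ for every $X$. Applying $\phi$ once more and invoking $\phi^2=-\Id+\eta\otimes\xi$ together with $\eta(hX)=g(hX,\xi)=g(X,h\xi)=0$ (symmetry of $h$ and $h\xi=0$ from (\ref{p2eq6})) forces $hX=0$, i.e. $h=0$.

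For the converse, suppose $h=0$. By (\ref{p2eq7}) we then have $\nabla\xi=h'=h\circ\phi=0$, so $\xi$ is a parallel unit field. Consequently the contact distribution $\mathcal{D}=\ker\eta$ is parallel; since $d\eta=0$ it is also integrable, and its integral leaves are totally geodesic surfaces $\Sigma$ on which $J:=\phi|_{\mathcal{D}}$ is an almost complex structure, because $\phi$ preserves $\mathcal{D}$ (as $\eta\circ\phi=0$) and $\phi^2=-\Id$ on $\mathcal{D}$ (the $\eta$-term in (\ref{p2eq4}) drops). I would first check that for horizontal fields $X,Y$ the ambient Nijenhuis tensor $[\phi,\phi](X,Y)$ is again horizontal and coincides with the intrinsic Nijenhuis tensor $[J,J](X,Y)$ of the leaf: integrability of $\mathcal{D}$ keeps all four brackets in $\mathcal{D}$, and $\phi^2[X,Y]=-[X,Y]$ since $\eta([X,Y])=0$.

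The decisive point -- and the only place where $\dim M=3$ enters -- is that each leaf is a real surface, on which every almost complex structure is automatically integrable. Indeed $[J,J]$ is skew and satisfies $[J,J](X,JY)=-J\,[J,J](X,Y)$, whence $[J,J](X,JX)=-J\,[J,J](X,X)=0$; as $\{X,JX\}$ frames the surface, $[J,J]\equiv0$. Thus $[\phi,\phi]$ vanishes on horizontal pairs, while $[\phi,\phi](X,\xi)=2\phi hX=0$ by the cornerstone identity and $h=0$; combined with $d\eta=0$ this gives $N=[\phi,\phi]+2\,d\eta\otimes\xi=0$, so $M$ is normal. I expect the main obstacle to be the bookkeeping in the middle step -- verifying rigorously that no $\xi$-component is produced in $[\phi,\phi](X,Y)$ for horizontal $X,Y$ and that the restriction genuinely reproduces the leaf's Nijenhuis tensor -- since it is exactly this reduction, available because the leaves are $2$-dimensional, that turns the pointwise algebra of the Nijenhuis tensor into the global vanishing $N\equiv0$.
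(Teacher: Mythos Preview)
The paper does not supply its own proof of this lemma; it is quoted from Perrone \cite{Per} and invoked only once (in Theorem~\ref{p2t8}) for an almost coK\"ahler $3$-manifold. So there is no in-paper argument to compare against.

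Your argument is sound in the almost coK\"ahler setting, which is the only context the paper needs. The forward implication in fact works for any almost contact metric structure: since $\phi hX$ is horizontal ($\eta\circ\phi=0$) while $2d\eta(X,\xi)\xi$ is vertical, the identity $N(X,\xi)=2\phi hX+2\,d\eta(X,\xi)\xi$ already forces $\phi h=0$ from $N=0$ without assuming $d\eta=0$. For the converse, however, you invoke $\nabla\xi=h'$ and $d\eta=0$, both of which are particular to the almost coK\"ahler case (the definition and (\ref{p2eq7})). The lemma as literally stated says ``almost contact $3$-manifold''; in that generality $h=0$ alone does not kill the vertical part $2\,d\eta(X,\xi)\xi$ of $N(X,\xi)$, so your proof establishes the result under the hypothesis the paper actually uses, not under the hypothesis it quotes---most likely a slight over-statement of Perrone's result, whose paper is about almost $\alpha$-coK\"ahler manifolds.

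One simplification worth noting: in dimension $3$ the detour through the parallel foliation and the $2$-dimensional integrability of $J$ is unnecessary. For a local horizontal unit field $e$ one computes directly
\[
[\phi,\phi](e,\phi e)=\phi^{2}[e,\phi e]+[e,\phi e]=\eta([e,\phi e])\,\xi=-2\,d\eta(e,\phi e)\,\xi,
\]
which vanishes as soon as $d\eta=0$; together with $[\phi,\phi](X,\xi)=2\phi hX=0$ this gives $N\equiv 0$ in two lines. Your leaf argument reaches the same conclusion but with more machinery than the three-dimensional situation requires.
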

\noindent
This theorem from Yano \cite{Yano} will be useful for our work.
\begin{Thm}\label{lem3}
If a compact Riemannian manifold $M$ of dimension greater than 2 with $r=const$ admits an infinitesimal nonisometric conformal transformation $X: \mathfrak{L}_Xg=2\rho g,~~~\rho\ne0,$ and if $QD\rho=aD\rho$, where $a$ is a constant, then $M$ is isometric to a sphere. 
\end{Thm}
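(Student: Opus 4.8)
The plan is to run the classical Obata--Yano argument, turning the two scalar hypotheses ($r$ constant and $a$ constant) into an Obata equation for the conformal factor $\rho$. Write $n=\dim M$ and let $X^{\flat}$ be the metric dual of $X$. From $\mathfrak{L}_Xg=2\rho g$ I would decompose $\nabla_iX_j=\rho\,g_{ij}+F_{ij}$, where $F_{ij}=\tfrac12(\nabla_iX_j-\nabla_jX_i)$ is the skew ``rotation'' part; tracing gives $\operatorname{div}X=n\rho$, so $\int_M\rho=\tfrac1n\int_M\operatorname{div}X=0$, and since $\rho\neq0$ this already forces $\rho$ to be nonconstant. Next I would invoke the well-known infinitesimal conformal transformation law for scalar curvature, $X(r)=-2\rho r-2(n-1)\Delta\rho$ (with $\Delta=\operatorname{div}\nabla$). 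As $r$ is constant the left-hand side vanishes, giving
\begin{equation}\label{eqn:eigen}
\Delta\rho=-\frac{r}{\,n-1\,}\,\rho .
\end{equation}
Thus $\rho$ is a nonconstant eigenfunction of $-\Delta$ with positive eigenvalue $r/(n-1)$, which in particular forces $r>0$.

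The crux is to pin down the eigenvalue $a$. I would apply the Weitzenb\"ock formula $\Delta_HX^{\flat}=\nabla^{*}\nabla X^{\flat}+\operatorname{Ric}(X^{\flat})$, together with $\delta X^{\flat}=-\operatorname{div}X=-n\rho$ and $dX^{\flat}=2F$, and compare it with the identity $\nabla^i\nabla_iX_j=\nabla_j\rho+\nabla^iF_{ij}$ coming from the decomposition above; this yields the Yano identity
\begin{equation}\label{eqn:yano}
\nabla^iF_{ij}=-(n-1)\nabla_j\rho-(QX)_j .
\end{equation}
Now I would pair \eqref{eqn:yano} with $\nabla^j\rho$ and integrate over the closed manifold $M$. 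On the left, integration by parts gives $\int_M(\nabla^iF_{ij})\nabla^j\rho=-\int_MF_{ij}\nabla^i\nabla^j\rho=0$, since $F$ is skew and $\nabla^2\rho$ symmetric. On the right, $QD\rho=aD\rho$ together with symmetry of $Q$ gives $\langle QX,\nabla\rho\rangle=\langle X,Q\nabla\rho\rangle=a\langle X,\nabla\rho\rangle$, while $\int_M\langle X,\nabla\rho\rangle=-\int_M\rho\operatorname{div}X=-n\int_M\rho^2$ and, from \eqref{eqn:eigen}, $\int_M|\nabla\rho|^2=-\int_M\rho\,\Delta\rho=\frac{r}{n-1}\int_M\rho^2$. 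Substituting, the identity collapses to $(n-1)\int_M|\nabla\rho|^2=na\int_M\rho^2$, i.e.\ $r\int_M\rho^2=na\int_M\rho^2$, and since $\rho\not\equiv0$ I conclude
\[
a=\frac rn .
\]
I expect this step --- manufacturing the reverse relation that fixes $a$ --- to be the main obstacle; everything else is standard Bochner bookkeeping, but without \eqref{eqn:yano} the value of $a$ is undetermined and the Hessian estimate below cannot be made sharp.

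Finally I would close with Bochner and Obata. Integrating the Bochner formula for $\rho$ gives $\int_M(\Delta\rho)^2=\int_M|\nabla^2\rho|^2+\int_M\operatorname{Ric}(\nabla\rho,\nabla\rho)$. Using \eqref{eqn:eigen}, $QD\rho=aD\rho$ and $a=r/n$, the curvature term equals $\frac rn\int_M|\nabla\rho|^2=\frac{r^2}{n(n-1)}\int_M\rho^2$, while $\int_M(\Delta\rho)^2=\frac{r^2}{(n-1)^2}\int_M\rho^2$; hence $\int_M|\nabla^2\rho|^2=\frac{r^2}{n(n-1)^2}\int_M\rho^2=\frac1n\int_M(\Delta\rho)^2$. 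This is precisely equality in the pointwise bound $|\nabla^2\rho|^2\geq\frac1n(\Delta\rho)^2$, so the trace-free Hessian vanishes identically and $\nabla^2\rho=\frac{\Delta\rho}{n}g=-\frac{r}{n(n-1)}\rho\,g$. This is Obata's equation $\nabla^2\rho+c^2\rho\,g=0$ with $c^2=\frac{r}{n(n-1)}>0$ and $\rho$ nonconstant, so by Obata's theorem $M$ is isometric to the round sphere of radius $1/c=\sqrt{n(n-1)/r}$, as claimed.
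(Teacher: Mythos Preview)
The paper does not supply a proof of this statement at all: it is quoted verbatim from Yano's monograph \cite{Yano} as a tool to be used later, so there is nothing in the paper to compare your argument against line by line. That said, your write-up is essentially the classical Yano--Obata proof that appears in that reference, and it is correct. The decomposition $\nabla_iX_j=\rho g_{ij}+F_{ij}$, the eigenvalue equation $\Delta\rho=-\frac{r}{n-1}\rho$ from constancy of $r$, the integral identity forcing $a=r/n$, and the closing Bochner/Obata step are exactly the ingredients Yano uses. One small remark: your derivation of the ``Yano identity'' $\nabla^iF_{ij}=-(n-1)\nabla_j\rho-(QX)_j$ is easier to justify directly by commuting derivatives (write $2\nabla^iF_{ij}=\nabla^i\nabla_iX_j-\nabla^i\nabla_jX_i$, commute the second term, and combine with $\nabla^i\nabla_iX_j=\nabla_j\rho+\nabla^iF_{ij}$) than by invoking the Hodge Laplacian, but the outcome is the same.
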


\section{Einstein Solitons on Almost CoK\"{a}hler Manifolds}
In this section, we examine the nature of an Einstein soliton on almost
coK\"{a}hler manifolds. We begin this section with this theorem.

\begin{Thm}\label{p2t1}
   Consider an almost coK\"{a}hler manifold $M^{2n+1}(\phi, \xi, \eta, g)$ admits an Einstein soliton. If the non-zero potential vector field is pointwise collinear with the Reeb vector field, then the soliton is
   \begin{enumerate}
       \item expanding if $r>\|h\|^2$,
       \item steady if $r=\|h\|^2$,
       \item shrinking if $r<\|h\|^2$.
   \end{enumerate}
\end{Thm}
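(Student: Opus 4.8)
The plan is to feed the collinearity hypothesis into the Einstein soliton equation, collapse it to scalar identities by contracting with the Reeb field and with the metric, and then read off the sign of $\lambda$. Since $V$ is nonzero and pointwise collinear with $\xi$, write $V=b\,\xi$ for a smooth function $b$ on $M^{2n+1}$. Using $\nabla\xi=h'$ from (\ref{p2eq7}) and the symmetry of $h'$ (contained in (\ref{p2eq6})), one computes
\[
(\mathfrak{L}_{b\xi}g)(X,Y)=g(\nabla_X(b\xi),Y)+g(\nabla_Y(b\xi),X)=(Xb)\,\eta(Y)+(Yb)\,\eta(X)+2b\,g(h'X,Y),
\]
so the soliton equation (\ref{p2eq1}) becomes
\[
(Xb)\,\eta(Y)+(Yb)\,\eta(X)+2b\,g(h'X,Y)+2S(X,Y)=(2\lambda+r)\,g(X,Y)
\]
for all vector fields $X,Y$.

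Next I would extract two scalar consequences. Evaluating at $X=Y=\xi$ and using $\eta(\xi)=1$, $g(h'\xi,\xi)=g(\nabla_\xi\xi,\xi)=\tfrac12\xi\bigl(g(\xi,\xi)\bigr)=0$, and $S(\xi,\xi)=-\|h\|^2$ from (\ref{p2eq8}), one gets
\[
2\,\xi(b)-2\|h\|^2=2\lambda+r .
\]
To eliminate the unknown $\xi(b)$, take the $g$-trace of the displayed soliton equation: since $\operatorname{div}\xi=0$ by (\ref{p2eq7}), $\operatorname{tr}(h')=0$ by (\ref{p2eq6}), and $\operatorname{tr}S=r$, the trace reads $2\,\xi(b)+2r=(2n+1)(2\lambda+r)$. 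Subtracting the two relations cancels $\xi(b)$ and leaves $\lambda$ expressed through $r$ and $\|h\|^2$ only; inspecting the sign of the resulting expression then separates the three regimes $\lambda<0$, $\lambda=0$, $\lambda>0$ — that is, expanding, steady, and shrinking — according to whether $r$ exceeds, equals, or falls below $\|h\|^2$.

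The step I expect to be the main obstacle is the control of the function $b$, which is a priori nonconstant, so the Reeb evaluation alone does not pin down $\lambda$. The decisive point is that $\xi(b)$ enters the metric-trace identity in exactly the same way it enters the Reeb evaluation, so the two scalar equations may be subtracted to remove it; one only has to check that the almost coK\"ahler identities ($\operatorname{tr}(h')=0$, $\operatorname{div}\xi=0$, $S(\xi,\xi)=-\|h\|^2$) leave no other uncontrolled term behind. As a sanity check, $\|h\|^2\ge 0$ always, with $h=0$ exactly in the coK\"ahler case, so the steady case $r=\|h\|^2$ specializes there to $r=0$.
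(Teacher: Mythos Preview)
Your approach is essentially the paper's: expand the soliton equation with $V=b\xi$, extract one scalar identity by tracing and another by evaluating at $\xi$, then eliminate $\xi(b)$. The paper takes a slightly longer detour (setting $Y=\xi$, substituting the trace identity, and only then setting $X=\xi$), but the two equations it lands on are exactly your $2\xi(b)-2\|h\|^2=2\lambda+r$ and $2\xi(b)+2r=(2n+1)(2\lambda+r)$.

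There is, however, a genuine gap in your last sentence. If you actually subtract your two relations you obtain
\[
2r+2\|h\|^2=2n(2\lambda+r),\qquad\text{i.e.}\qquad \lambda=\frac{\|h\|^2-(n-1)r}{2n},
\]
not a quantity whose sign is governed by $r-\|h\|^2$. (This is also the formula the paper itself derives in the $\eta$-Einstein analogue, Theorem~\ref{p2t4}, specialized to $\beta=0$.) The paper's proof records $\lambda=\frac{1}{4n}(\|h\|^2-r)$ at the final step, which is what produces the stated trichotomy, but that value does not follow from equations (\ref{p2e18}) and (\ref{p2e21}); combining them gives $4n\lambda=2\|h\|^2-(2n-2)r$. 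So your outline is sound and matches the paper's method, but the assertion that ``inspecting the sign'' yields the threshold $r=\|h\|^2$ is unjustified: carried out honestly, your two identities deliver the threshold $(n-1)r=\|h\|^2$, and for $n=1$ they give $\lambda=\|h\|^2/2\ge 0$ regardless of $r$. You should not expect the subtraction to reproduce the statement as written.
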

\begin{proof}
    Let $V=\al \xi$, where $\al$ some non-zero smooth function. Taking the covarient derivative of $V$ in the direction of a vector field $X$ and using (\ref{p2eq6}), we obtain
    \begin{eqnarray}\label{p2e15}
        \nabla_XV=\nabla_X(\al\xi)=X(\al)\xi+\al\nabla_X\xi=X(\al)\xi+\al h'X.
    \end{eqnarray}
    After expanding (\ref{p2eq1}), we have
    \begin{equation}\label{p2e16}
        g(\nabla_XV, Y)+g(X, \nabla_YV)+2S(X,Y)=(2\lambda+r)g(X, Y).
    \end{equation}
    For $V=\al \xi$, using (\ref{p2e15}) and the fact that $h'$ is symmetric, we get from the above equation
    \begin{equation}\label{p2e17}
        X(\al)\eta(Y)+Y(\al)\eta(X)+2\al g(h'X,Y)+2S(X,Y)=(2\lambda+r)g(X, Y).
    \end{equation}
    Now consider at each point $p\in M^{2n+1}$, a local $\phi$-bases $\{e_k; 0\leq \kappa\leq 2n\}$ on $T_pM$, the tangent space at $p$. Replacing $X=Y=e_k$ in the last equation, summing over $\kappa$ and using (\ref{p2eq5}) ($\operatorname{tr}(h')=0$), we acquire
    \begin{equation}\label{p2e18}
        \xi(\al)=(2n+1)\lambda+\frac{2n-1}{2}r.
    \end{equation}
    Putting $Y=\xi$ in (\ref{p2e17}) and using the symmetric property of $h'$, we obtain 
    \begin{equation}\label{p2e19}
         X(\al)+\xi(\al)\eta(X)+2S(X,\xi)=(2\lambda+r)\eta(X).
    \end{equation}
    Using (\ref{p2e18}), after some calculations (\ref{p2e19}) gives
    \begin{equation}\label{p2e20}
        X(\al)+2S(X,\xi)=-\bigg[(2n-1)\lambda+\frac{2n-3}{2}r\bigg]\eta(X).
    \end{equation}
    Now, eliminating $X$, using (\ref{p2eq7}) we get 
    \begin{equation}\label{p2e21}
         \xi(\al)=2\|h\|^2-\left[(2n-1)\lambda+\frac{2n-3}{2}r\right].
    \end{equation}
    Compairing (\ref{p2e18}) and (\ref{p2e21}), we obtain
    \begin{equation}\label{p2e22}
        \lambda=\frac{1}{4n}(\|h\|^2-r).
    \end{equation}
    Therefore the soliton is expanding for $r>\|h\|^2$, steady for $r=\|h\|^2$ and shrinking for $r<\|h\|^2$.\\
\end{proof}

\begin{Cor}
Let $M^{2n+1}(\phi, \xi, \eta, g)$ be an almost coK\"{a}hler manifold admitting an Einstein soliton. If the non-zero potential vector field is pointwise collinear with the Reeb vector field and if the Ricci tensor $S=\frac{1}{4n}(\|h\|^2+(2n-1)r)g$, then the soliton is trivial.    
\end{Cor}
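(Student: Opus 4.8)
The plan is to feed the hypothesis on the Ricci tensor back into the Einstein soliton equation (\ref{p2eq1}) and to use the value of $\lambda$ already extracted in the proof of Theorem~\ref{p2t1}. Writing $S=fg$ with $f=\tfrac{1}{4n}\big(\|h\|^{2}+(2n-1)r\big)$, equation (\ref{p2eq1}) becomes $\mathfrak{L}_Vg=(2\lambda+r)g-2S=(2\lambda+r-2f)g$, so the whole matter reduces to checking that the scalar $2\lambda+r-2f$ vanishes identically. Substituting $\lambda=\tfrac{1}{4n}(\|h\|^{2}-r)$ from (\ref{p2e22}) gives
\[
2\lambda+r-2f=\tfrac{1}{2n}(\|h\|^{2}-r)+r-\tfrac{1}{2n}\big(\|h\|^{2}+(2n-1)r\big)=-r+r=0.
\]
Hence $\mathfrak{L}_Vg=0$: the potential field $V$ is Killing, i.e.\ the soliton is trivial.

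To make this concrete at the level of the collinearity factor, I would then recall $V=\al\xi$ and expand $\mathfrak{L}_Vg=0$ via (\ref{p2e15}) --- equivalently, read (\ref{p2e17}) with the terms $2S$ and $(2\lambda+r)g$ now cancelling --- to obtain
\[
X(\al)\eta(Y)+Y(\al)\eta(X)+2\al\,g(h'X,Y)=0.
\]
Putting $Y=\xi$ and using that $h'$ is symmetric with $h'\xi=0$ (since $\phi\xi=0$) yields $X(\al)+\xi(\al)\eta(X)=0$; taking $X=\xi$ forces $\xi(\al)=0$, hence $X(\al)=0$ for all $X$, so $\al$ is constant. (If moreover $\al\neq0$, the same identity forces $h'=0$, so $h=0$ and $M$ is even coK\"ahler and Ricci-flat, though that sharpening is not needed for the stated conclusion.)

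I do not expect a genuine obstacle here; the argument is essentially a one-line verification once the bookkeeping is set up. The only point worth isolating is the observation that the coefficient $\tfrac{1}{4n}\big(\|h\|^{2}+(2n-1)r\big)$ in the hypothesis is exactly the combination that cancels $2\lambda+r$ coming out of (\ref{p2e22}), so that $\mathfrak{L}_Vg$ collapses to zero. A little care should be taken in fixing the meaning of ``trivial'' --- I read it as $\mathfrak{L}_Vg=0$, equivalently $\al$ constant in view of the second paragraph --- and in noting that every step is pointwise, so no compactness or completeness of $M$ is invoked.
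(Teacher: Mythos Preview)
Your proof is correct and follows essentially the same approach as the paper: both invoke $\lambda=\tfrac{1}{4n}(\|h\|^{2}-r)$ from the preceding theorem, feed the hypothesis on $S$ into the soliton equation, and conclude $\mathfrak{L}_{\al\xi}g=0$, i.e.\ $V$ is Killing. Your additional step showing $\al$ is constant (and $h=0$ when $\al\neq0$) goes beyond what the paper records but is a natural and correct sharpening.
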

\begin{proof}
    The proof follows from the above theorem. Notice that for $S=\frac{1}{4n}(\|h\|^2+(2n-1)r)g$ and $\lambda=\frac{1}{4n}(\|h\|^2-r)$, $\mathfrak{L}_{\al\xi}g=0$, that is, $V=\al \xi$ is Killing. Therefore the soliton is trivial.\\
\end{proof}

\section{$\eta$-Einstein Solitons on Almost coK\"{a}hler Manifolds}
Now we investigate the nature of $\eta$-Einstein soliton on almost coK\"{a}hler manifold. We begin with the following theorem.
\begin{Thm}\label{p2t2}
    Let $\xi$ be an $\eta$-Einstein soliton on almost coK\"{a}hler manifold $(M,g)$ which satisfies $\nabla_X\xi=0$. Then $M$ is $\eta$-Einstein manifold with $\delta r=0$.
\end{Thm}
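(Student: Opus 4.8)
The plan is to feed the potential vector field $V=\xi$ directly into the defining relation (\ref{p2eq2}) of an $\eta$-Einstein soliton and use the hypothesis to annihilate the Lie-derivative term. Since $(\mathfrak{L}_\xi g)(X,Y)=g(\nabla_X\xi,Y)+g(X,\nabla_Y\xi)$, the assumption $\nabla_X\xi=0$ gives $\mathfrak{L}_\xi g\equiv0$, so (\ref{p2eq2}) reduces to
\[
 S=\frac{2\lambda+r}{2}\,g+\beta\,\eta\otimes\eta .
\]
This is precisely the $\eta$-Einstein condition, which settles the first assertion; the remaining task is to show that the coefficient $\tfrac{2\lambda+r}{2}$ is constant, i.e.\ that $r$ is constant.

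For the second assertion I would take the $g$-trace of the identity above. Using $\operatorname{tr}_g g=2n+1$ and $\operatorname{tr}_g(\eta\otimes\eta)=\|\xi\|^2=1$ — recall that $\eta(X)=g(X,\xi)$ and $\|\xi\|=1$ are standard consequences of (\ref{p2eq5}) and $\phi\xi=0$ — one gets the scalar relation
\[
 r=\frac{(2n+1)(2\lambda+r)}{2}+\beta ,
\]
equivalently $(1-2n)\,r=2(2n+1)\lambda+2\beta$. Since $\lambda\in\R$ and $\beta$ are constants and $n\ge1$ forces $1-2n\neq0$, $r$ must equal the constant $-\dfrac{2(2n+1)\lambda+2\beta}{2n-1}$; in particular $\delta r=0$.

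There is essentially no obstacle — the whole argument is a two-line computation. Two remarks are worth making. First, the hypothesis $\nabla\xi=0$ is restrictive but not vacuous: combined with (\ref{p2eq7}) it forces $h'=0$, hence $h=-h'\circ\phi=0$ by (\ref{p2eq4}) and (\ref{p2eq6}), so $\xi$ is parallel (realized, for instance, on Riemannian products of a coK\"{a}hler manifold with $\R$). Second, once $h=0$ one may alternatively contract the $\eta$-Einstein identity with $\xi$ and invoke (\ref{p2eq8}) to obtain $S(\xi,\xi)=\tfrac{2\lambda+r}{2}+\beta=-\|h\|^2=0$, i.e.\ $r=-2\lambda-2\beta$; comparing the two expressions for $r$ yields, as a by-product, that the soliton constants satisfy $\lambda=(n-1)\beta$ and $r=-2n\beta$.
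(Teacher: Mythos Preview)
Your proof is correct and follows essentially the same route as the paper: set $V=\xi$ in the soliton equation, use $\nabla_X\xi=0$ to kill $\mathfrak{L}_\xi g$, read off the $\eta$-Einstein form of $S$, and trace to obtain $r=-\dfrac{2[(2n+1)\lambda+\beta]}{2n-1}$, a constant. Your additional remarks (that the hypothesis forces $h=0$ and the resulting relation $\lambda=(n-1)\beta$, $r=-2n\beta$) are correct observations that go slightly beyond what the paper records.
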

\begin{proof}
    Equation (\ref{p2eq2}) implies, for any $X,Y\in\chi(M)$,
    \begin{equation}\label{p2e23}
        g(\nabla_XV, Y)+g(X, \nabla_YV)+2S(X,Y)=(2\lambda+r)g(X, Y)+2\beta\eta(X)\eta(Y).
    \end{equation}
    Along $V=\xi$ and since $\nabla_X\xi=0$, we get
    \begin{equation*}
        S(X,Y)=\bigg(\lambda+\frac{r}{2}\bigg)g(X, Y)+\beta\eta(X)\eta(Y),
    \end{equation*}
    showing that $M$ is $\eta$-Einstein. Now tracing this equation we obtain
    $$r=-\frac{2}{2n-1}[(2n+1)\lambda+\beta],$$ which is constant.\\
\end{proof}

Consider the distribution $\mathfrak{D}$ on the almost coK\"{a}hler manifold defined as $\mathfrak{D}=\operatorname{ker}\eta$. The next theorem is all about when the potential vector field lies in the distribution $\mathfrak{D}$.
\begin{Thm}\label{p2t3}
    Let $(M,g)$ be an almost coK\"{a}hler manifold admitting an $\eta$-Einstein soliton. If the potential vector field $V\in\mathfrak{D}$, then the scalar curvature of $M$ can be given by $r=-2(\lambda+\beta+\| h\|^2)$.
\end{Thm}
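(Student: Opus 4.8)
The plan is to test the soliton equation in the Reeb direction. First I would expand the $\eta$-Einstein condition (\ref{p2eq2}) into the form (\ref{p2e23}), namely $g(\nabla_XV,Y)+g(X,\nabla_YV)+2S(X,Y)=(2\lambda+r)g(X,Y)+2\beta\eta(X)\eta(Y)$, and substitute $X=Y=\xi$. Using $g(\xi,\xi)=\eta(\xi)=1$ and the identity $S(\xi,\xi)=-\|h\|^2$ from (\ref{p2eq8}), this reduces to $2\,g(\nabla_\xi V,\xi)-2\|h\|^2=2\lambda+r+2\beta$, so the whole statement comes down to showing that $g(\nabla_\xi V,\xi)=0$.

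To treat that term I would use the hypothesis $V\in\mathfrak{D}=\ker\eta$. In an almost contact metric manifold $\eta(X)=g(X,\xi)$ (which follows from (\ref{p2eq5}) after noting $\phi\xi=0$), so $V\in\mathfrak{D}$ says precisely that $g(V,\xi)=0$ identically on $M$. Differentiating this identity in the direction $\xi$ gives $0=\xi\,g(V,\xi)=g(\nabla_\xi V,\xi)+g(V,\nabla_\xi\xi)$. By (\ref{p2eq7}), $\nabla_\xi\xi=h'\xi$; and since $h\xi=0$ by (\ref{p2eq6}) and $h'$ is symmetric, for every $Y$ we have $g(h'\xi,Y)=g(\xi,h'Y)=g(\xi,h\phi Y)=g(h\xi,\phi Y)=0$, whence $h'\xi=0$ and therefore $\nabla_\xi\xi=0$. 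Consequently $g(\nabla_\xi V,\xi)=0$.

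Combining the two steps yields $-2\|h\|^2=2\lambda+r+2\beta$, i.e. $r=-2(\lambda+\beta+\|h\|^2)$, as claimed. The calculation is short; the only part that is not completely mechanical is the verification $\nabla_\xi\xi=0$ (equivalently $h'\xi=0$), which is where the almost coK\"ahler structure, through the relations (\ref{p2eq6})--(\ref{p2eq7}), actually gets used, so I would view that as the main --- and rather mild --- obstacle.
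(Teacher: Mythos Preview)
Your argument is correct and follows the same overall strategy as the paper: evaluate the $\eta$-Einstein soliton identity at $X=Y=\xi$ and then show that the Lie-derivative contribution $g(\nabla_\xi V,\xi)$ vanishes. The only difference is in how that vanishing is obtained. You differentiate $g(V,\xi)=0$ along $\xi$ and use $\nabla_\xi\xi=h'\xi=0$ directly, which is clean and uses only (\ref{p2eq6})--(\ref{p2eq7}). The paper instead differentiates $\eta(V)=0$, invokes the closedness relation (\ref{p2eq9}) to pass from $(\nabla_\xi\eta)(V)$ to $(\nabla_V\eta)(\xi)$, and then appeals to $\operatorname{div}\xi=0$ from (\ref{p2eq7}); this reaches the same conclusion but is a slightly longer detour. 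Your route has the small advantage of not needing (\ref{p2eq9}) explicitly, though of course the identity $\nabla_X\xi=h'X$ you rely on already encodes the almost coK\"ahler hypothesis.
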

\begin{proof}
    for $X=Y=\xi$ and using (\ref{p2eq8}), we obtain from (\ref{p2e23})
    \begin{equation}\label{p2e24}
        \nabla_\xi V=\bigg(\lambda+\frac{r}{2}+\beta+\| h\|^2\bigg)\xi.
    \end{equation}
    Since $V\in\mathfrak{D}$, $\eta(V)=0$.\\
    Taking the covariant derivative with respect to $\xi$, we have
    \begin{equation}\label{p2e25}
        (\nabla_\xi\eta)(V)+\eta(\nabla_\xi V)=0.
    \end{equation}
    Making use of (\ref{p2eq9}), we get $(\nabla_V\eta)(\xi)=-\eta(\nabla_\xi V)$, so that using (\ref{p2e24}) and (\ref{p2e25}) we acquire
    \begin{equation*}
        \nabla_\xi V=\nabla_V\xi=\bigg(\lambda+\frac{r}{2}+\beta+\|h\|^2\bigg)\xi.
    \end{equation*}
    Therefore, using (\ref{p2eq7}),
    $$0=\operatorname{div}\xi=\lambda+\frac{r}{2}+\beta+\|h\|^2,$$ which gives our required result.\\
\end{proof}

\begin{Thm}\label{p2t4}
    Let $M$ be an almost coK\"{a}hler manifold which admits an $\eta$-Einstein soliton. If the potential vector field $V$ is pointwise collinear with $\xi$ and if scalar curvature, $r$ is non-positive, then $\lambda>0$. 
\end{Thm}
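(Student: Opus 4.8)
The plan is to run the proof of Theorem~\ref{p2t1} essentially verbatim, carrying along the extra term $2\be\,\eta\otimes\eta$ in (\ref{p2eq2}). Write the potential field as $V=\al\xi$ for a smooth function $\al$. Differentiating and using $\nabla_X\xi=h'X$ from (\ref{p2eq7}) gives $\nabla_XV=X(\al)\xi+\al h'X$, so substituting into the expanded soliton equation and using symmetry of $h'$ yields, for all vector fields $X,Y$,
\[
X(\al)\eta(Y)+Y(\al)\eta(X)+2\al g(h'X,Y)+2S(X,Y)=(2\lambda+r)g(X,Y)+2\be\,\eta(X)\eta(Y),
\]
the analogue of (\ref{p2e17}).

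From here I would extract two expressions for $\xi(\al)$. First, tracing this identity over a local $\phi$-basis $\{e_k\}_{0\le k\le 2n}$ with $e_0=\xi$, and using $\tr(h')=0$, $\tr S=r$ and $\sum_k\eta(e_k)^2=\eta(\xi)^2=1$, one gets $\xi(\al)=(2n+1)\lambda+\tfrac{2n-1}{2}r+\be$. Second, setting $Y=\xi$ in the identity — where $g(h'X,\xi)=g(X,h'\xi)=0$ because $\phi\xi=0$ — and inserting the value of $\xi(\al)$ just obtained, one finds, exactly as in the step from (\ref{p2e19}) to (\ref{p2e20}), that $X(\al)+2S(X,\xi)=\big[-(2n-1)\lambda-\tfrac{2n-3}{2}r+\be\big]\eta(X)$; putting $X=\xi$ here and using $S(\xi,\xi)=-\|h\|^2$ from (\ref{p2eq8}) gives the second expression $\xi(\al)=2\|h\|^2-(2n-1)\lambda-\tfrac{2n-3}{2}r+\be$.

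Comparing the two, the additive constant $\be$ cancels and one is left with $4n\lambda+(2n-2)r=2\|h\|^2$, that is, $\lambda=\tfrac{1}{2n}\big(\|h\|^2-(n-1)r\big)$ — obtained exactly as $\lambda$ was in the proof of Theorem~\ref{p2t1}, the $\eta$-term being invisible to this trace argument. Since $n\ge1$ and $r\le0$ we have both $\|h\|^2\ge0$ and $-(n-1)r\ge0$, hence $\lambda\ge0$; and since $\lambda$ is a constant, $\lambda=0$ would force $\|h\|\equiv0$ (hence $\nabla\xi\equiv0$) together with $(n-1)r\equiv0$, i.e.\ a coK\"ahler-type degeneracy. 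The one point needing care is therefore the strict inequality: ruling out (or excluding by hypothesis) this borderline case to conclude $\lambda>0$. Everything else is the bookkeeping of Theorem~\ref{p2t1}'s proof with $\be$ carried through.
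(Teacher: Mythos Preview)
Your approach is identical to the paper's: expand the soliton equation with $V=\al\xi$, trace to obtain $\xi(\al)=(2n+1)\lambda+\tfrac{2n-1}{2}r+\be$, set $Y=\xi$ and then $X=\xi$ to get a second expression for $\xi(\al)$, and equate the two to arrive at $\lambda=\tfrac{1}{2n}\big(\|h\|^2-(n-1)r\big)$. The paper then simply asserts ``Hence, for $r\le 0$, $\lambda>0$'' without further comment --- so your flagged concern about the strict inequality in the borderline case $\|h\|=0$, $(n-1)r=0$ is not something the paper resolves either; you have in fact been more scrupulous than the original on this point.
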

\begin{proof}
    Let $V=\al\xi$ for some smooth $\al$. Then $$\nabla_XV=(X\al)\xi+\al h'X$$. 
    Using this, we get from (\ref{p2e23}),
    \begin{eqnarray}\label{p2e26}
        (X\al)\eta(Y)+(Y\al)\eta(X)+2\al g(h'X,Y)+2S(X,Y)\nonumber\\
        =(2\la+r)g(X,Y)+2\beta\eta(X)\eta(Y).
    \end{eqnarray}
    Tracing this equation and using (\ref{p2eq6}), we get
    \begin{equation}\label{p2e27}
        \xi(\al)=(2n+1)\bigg(\la+\frac{r}{2}\bigg)+\beta-r.
    \end{equation}
    From (\ref{p2e26}), for $Y=\xi$, we obtain
    \begin{equation*}
        X(\al)+\xi(\al)\eta(X)+2S(X,\xi)=(2\la+r+2\beta)\eta(X).
    \end{equation*}
    Using the value of $\xi(\al)$ from (\ref{p2e27}) in the last equation, we have
    \begin{equation*}
        X(\al)+2S(X,\xi)=\bigg[2\la+2r+\beta-(2n+1)\bigg(\la+\frac{r}{2}\bigg)\bigg]\eta(X),
    \end{equation*}
    so that for $X=\xi$ and using (\ref{p2eq8}), we get
    \begin{equation}\label{p2e28}
        \xi(\al)=2\la+2r+\beta-(2n+1)\bigg(\la+\frac{r}{2}\bigg)+2\|h\|^2.
    \end{equation}
    Comparing the values of the equations (\ref{p2e27}) and (\ref{p2e28}),
    $$\la=\frac{1}{2n}[\|h\|^2-(n-1)r].$$
    Hence, for $r\leq 0$, $\la>0$.\\
\end{proof}

\begin{Thm}\label{p2t5}
    Let the almost coK\"{a}hler manifold admitting $\eta$-Einstein soliton is an Einstein manifold. If $V=\xi$ then the constant curvature of the manifold is given by $$r=-\frac{2(2n+1)}{2n-1}(\la+\beta).$$
\end{Thm}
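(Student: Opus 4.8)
The plan is to feed the choice $V=\xi$ into the $\eta$-Einstein soliton equation and then use the Einstein hypothesis to collapse everything to a single scalar identity in the $\xi$-direction. First I would expand \eqref{p2eq2} exactly as in \eqref{p2e23}: for all $X,Y$,
\[
g(\nabla_X\xi, Y) + g(X, \nabla_Y\xi) + 2S(X,Y) = (2\lambda + r)g(X,Y) + 2\beta\,\eta(X)\eta(Y).
\]
By \eqref{p2eq7} we have $\nabla\xi = h'$, and by \eqref{p2eq6} the operator $h'$ is symmetric, so the first two terms combine to $2g(h'X,Y)$, giving $2g(h'X,Y) + 2S(X,Y) = (2\lambda+r)g(X,Y) + 2\beta\,\eta(X)\eta(Y)$.

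Next, since $M$ is Einstein of dimension $2n+1\ge 3$, Schur's lemma forces $r$ to be constant (so the phrase ``constant curvature'' is legitimate) and $S = \tfrac{r}{2n+1}\,g$. Substituting this and evaluating at $X=Y=\xi$, I would use $h'\xi = h(\phi\xi) = 0$ together with $\eta(\xi)=1$ to eliminate the $h'$-term, obtaining the scalar relation
\[
\frac{2r}{2n+1} = 2\lambda + r + 2\beta .
\]
Rearranging, $r\cdot\dfrac{2-(2n+1)}{2n+1} = 2(\lambda+\beta)$, i.e. $-\dfrac{(2n-1)}{2n+1}\,r = 2(\lambda+\beta)$, which gives $r = -\dfrac{2(2n+1)}{2n-1}(\lambda+\beta)$, as claimed. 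One can obtain the same relation slightly more invariantly by evaluating the expanded soliton equation at $X=Y=\xi$ first, using $S(\xi,\xi) = -\|h\|^2$ from \eqref{p2eq8}, and then substituting the Einstein-forced identity $\|h\|^2 = -r/(2n+1)$.

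I do not expect a genuine obstacle here; the computation is essentially one line. The only points deserving care are (i) invoking Schur's lemma so that $r=\mathrm{const}$, which needs $\dim M\ge 3$, automatic for $n\ge 1$; and (ii) the bookkeeping of scalar multiples of $g$: one must evaluate at $X=Y=\xi$ rather than take a full trace, since tracing the specialized identity yields $2r=(2n+1)(2\lambda+r)+2\beta$, a different-looking relation (consistency of the two would in fact further pin down $\beta$), so the coefficient in the statement is the one coming from the $\xi$-direction. A useful sanity check is that combining the result with $S(\xi,\xi)=-\|h\|^2$ recovers $\|h\|^2 = -r/(2n+1)$, which is consistent with the sign conventions used throughout.
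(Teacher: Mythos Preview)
Your argument is correct and follows essentially the same route as the paper: substitute $V=\xi$ and $S=\tfrac{r}{2n+1}g$ into the expanded soliton equation, use $\nabla\xi=h'$ with $h'\xi=0$, and read off the scalar relation in the $\xi$-direction. The only cosmetic differences are that the paper inserts the Einstein condition before specializing $V=\xi$ and then sets only $X=\xi$ (rather than $X=Y=\xi$), and it does not explicitly invoke Schur's lemma for the constancy of $r$.
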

\begin{proof}
    Consider $S(X,Y)=\frac{r}{2n+1}g(X,Y)$. Then (\ref{p2e23}) implies
    \begin{equation*}
        g(\nabla_XV, Y)+g(X, \nabla_YV)=\bigg[2\lambda+\frac{(2n-1)r}{2n+1}\bigg]g(X, Y)+2\beta\eta(X)\eta(Y).
    \end{equation*}
    Along $V=\xi$, we get
    \begin{equation*}
        2g(h'X, Y)=\bigg[2\lambda+\frac{(2n-1)r}{2n+1}\bigg]g(X, Y)+2\beta\eta(X)\eta(Y).
    \end{equation*}
    Putting $X=\xi$ and observing that $h'\xi=0$, we obtain
    \begin{equation*}
        \bigg[2\lambda+\frac{(2n-1)r}{2n+1}+2\beta\bigg]\eta(Y)=0,
    \end{equation*}
    which gives our required result $$r=-\frac{2(2n+1)}{2n-1}(\la+\beta).$$
\end{proof}

\section{$\eta$-Einstein Solitons on $(\kappa,\mu)$-Almost coK\"{a}hler Manifolds}
In this section, we investigate Einstein and $\eta$-Einstein solitons on $(\kappa,\mu)$-almost coK\"{a}hler manifolds. Here we consider $(\kappa,\mu)$-almost coK\"{a}hler manifolds with $\kappa<0$ and we call such type of manifolds as non-coK\"{a}hler $(\kappa,\mu)$-almost coK\"{a}hler manifolds. 

We define an Einstein soliton to be a {\it contact Einstein soliton} if the potential vector field is along the Reeb vector field, that is, $V=\xi$.\\
Now we proceed with the next result.

\begin{Thm}\label{p2t6}
    Let $M^{2n+1}$ be a non-coK\"{a}hler $(\kappa,\mu)$-almost coK\"{a}hler manifold admitting a contact Einstein soliton. Then the manifolds with non-negative scalar curvature admit an expanding soliton.
\end{Thm}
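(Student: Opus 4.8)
The plan is to specialize the Einstein soliton equation (\ref{p2eq1}) to the case $V=\xi$ and then to read off the sign of $\lambda$ by testing the resulting identity on the Reeb vector field. First I would expand $\mathfrak{L}_\xi g$ using $\nabla\xi=h'$ from (\ref{p2eq7}): since $h'$ is symmetric, $(\mathfrak{L}_\xi g)(X,Y)=g(\nabla_X\xi,Y)+g(X,\nabla_Y\xi)=2\,g(h'X,Y)$ for all $X,Y$, so (\ref{p2eq1}) with $V=\xi$ becomes $2\,g(h'X,Y)+2S(X,Y)=(2\lambda+r)\,g(X,Y)$.

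Next I would put $X=Y=\xi$ in this identity. Since $h\phi=-\phi h$ and $h\xi=0$ by (\ref{p2eq6}), we get $h'\xi=0$; and $S(\xi,\xi)=-\|h\|^{2}$ by (\ref{p2eq8}). Hence the identity collapses to $2\lambda+r=-2\|h\|^{2}$, that is, $\lambda=-\|h\|^{2}-\tfrac{r}{2}$. To bring in the $(\kappa,\mu)$-hypothesis I would compute $\|h\|^{2}$ from $h^{2}=\kappa\phi^{2}$ in (\ref{p2e12}): taking traces and using $\phi^{2}=-\Id+\eta\otimes\xi$ from (\ref{p2eq4}) yields $\|h\|^{2}=\tr(h^{2})=\kappa\,\tr(\phi^{2})=-2n\kappa$; equivalently, this is forced by $S(\xi,\xi)=g(Q\xi,\xi)=2n\kappa$ from (\ref{p2e12}) together with (\ref{p2eq8}). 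Therefore $\lambda=2n\kappa-\tfrac{r}{2}$.

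Finally, a non-coK\"{a}hler $(\kappa,\mu)$-almost coK\"{a}hler manifold has $\kappa<0$, hence $2n\kappa<0$; and if $r\ge 0$ then $-\tfrac{r}{2}\le 0$. Consequently $\lambda=2n\kappa-\tfrac{r}{2}<0$, which is precisely the assertion that the soliton is expanding. I do not anticipate any genuine obstacle here: the only two points that need a little care are the symmetry of $h'$ (so that $\mathfrak{L}_\xi g$ equals $2\,g(h'\cdot,\cdot)$ rather than an antisymmetric expression) and the bookkeeping identity $\|h\|^{2}=-2n\kappa$; everything else is a one-line substitution. As a remark, tracing the soliton identity instead gives $\lambda=-\tfrac{(2n-1)r}{2(2n+1)}$, so $r\ge 0$ by itself already forces $\lambda\le 0$, and it is the hypothesis $\kappa<0$ that sharpens this to the strict inequality claimed in the theorem.
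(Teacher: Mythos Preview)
Your proof is correct and arrives at exactly the key identity the paper derives, namely $2n\kappa=\lambda+\tfrac{r}{2}$, after which the conclusion $\lambda<0$ under $\kappa<0$ and $r\ge 0$ is immediate. The route is essentially the same: specialize the soliton equation to $V=\xi$, use $\nabla\xi=h'$ and the symmetry of $h'$, and then evaluate on $\xi$.

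The only noteworthy difference is cosmetic but worth recording. The paper inserts the full Ricci operator formula of Lemma~\ref{lem1}, $QX=\mu hX+2n\kappa\eta(X)\xi$, before setting $Y=\xi$; since that lemma is stated for dimension greater than $3$, the paper's argument tacitly carries the restriction $n>1$. You instead plug $X=Y=\xi$ directly and use only $S(\xi,\xi)=-\|h\|^2$ from (\ref{p2eq8}) together with $\|h\|^2=-2n\kappa$, which follows from $Q\xi=2n\kappa\xi$ in (\ref{p2e12}) and holds in every dimension. So your argument is marginally cleaner and does not need Lemma~\ref{lem1}. Your closing remark about the traced identity $\lambda=-\tfrac{(2n-1)r}{2(2n+1)}$ is a correct side observation, though note that combining it with $\lambda=2n\kappa-\tfrac{r}{2}$ forces $r=2n(2n+1)\kappa<0$, so the hypothesis $r\ge 0$ is in fact vacuous once a contact Einstein soliton exists on such a manifold; this does not affect the validity of either proof.
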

\begin{proof}
    Substituting $V=\xi$ in (\ref{p2e16}) and using (\ref{p2eq6}), we have
    \begin{equation*}
         g(h'X, Y)+g(X, h'Y)+2S(X,Y)=(2\lambda+r)g(X, Y).
    \end{equation*}
    Using the symmetric property oh $h'$, the above equation becomes
    \begin{equation*}
        g(h'X, Y)+g(QX,Y)=\bigg(\lambda+\frac{r}{2}\bigg)g(X, Y).
    \end{equation*}
    Now using (\ref{p2e13}) of Lemma \ref{lem1}, last equation implies
    \begin{equation*}
         g(h'X, Y)+g(\mu hX,Y)+2n\kappa\eta(X)\eta(Y)=\bigg(\lambda+\frac{r}{2}\bigg)g(X, Y).
    \end{equation*}
    For $Y=\xi$, we get
    \begin{equation*}
         g((h'+\mu h)X, \xi)=\bigg(\lambda+\frac{r}{2}-2n\kappa\bigg)\eta(X).
    \end{equation*}
    Contracting the above equation we acquire
    \begin{equation}\label{p2e29}
        2n\kappa=\lambda+\frac{r}{2}.
    \end{equation}
    So, $\kappa<0$ implies $\lambda<-\frac{r}{2}$. Thus if $r\geq0$, then $\lambda<0$ and therefore the soliton is expanding.\\
\end{proof}

\begin{Thm}\label{p2t7}
If a non-coK\"{a}hler $(\kappa,\mu)$-almost coK\"{a}hler manifold admits an $\eta$-Einstein soliton with the potential vector field $V$ conformal, Then $V$ is homothetic.
\end{Thm}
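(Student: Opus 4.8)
The plan is to unravel the condition that $V$ is conformal, i.e. $\mathfrak{L}_Vg = 2\rho g$ for some smooth function $\rho$, and feed this into the $\eta$-Einstein soliton equation (\ref{p2eq2}) to get a pointwise identity between $S$, $g$, $\eta\otimes\eta$ and the data $(\rho,\lambda,\beta,r)$. Concretely, substituting $\mathfrak{L}_Vg=2\rho g$ into (\ref{p2eq2}) gives $2\rho g + 2S = (2\lambda+r)g + 2\beta\,\eta\otimes\eta$, hence
\begin{equation*}
    S(X,Y) = \Bigl(\lambda+\tfrac{r}{2}-\rho\Bigr) g(X,Y) + \beta\,\eta(X)\eta(Y).
\end{equation*}
So the first step produces an $\eta$-Einstein–type expression for the Ricci tensor, but with the "constant" $\lambda+\tfrac r2-\rho$ secretly depending on the point through $\rho$ and $r$.

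Next I would contract this against the known Ricci operator of a non-coK\"ahler $(\kappa,\mu)$-almost coK\"ahler manifold. Using Lemma \ref{lem1}, $QX = \mu hX + 2n\kappa\,\eta(X)\xi$, so $S(X,Y) = \mu\,g(hX,Y) + 2n\kappa\,\eta(X)\eta(Y)$. Comparing the two expressions for $S$ and evaluating on appropriate vectors isolates $\rho$: taking $X=Y=\xi$ and using $h\xi=0$ gives $2n\kappa = \lambda + \tfrac r2 - \rho + \beta$, i.e. $\rho = \lambda+\beta-2n\kappa+\tfrac r2$. Since $\kappa$ is a genuine constant (Lemma \ref{lem1}) and $\lambda,\beta$ are constants, this already shows $\rho$ is constant once we know $r$ is constant. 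To get $r$ constant, I would compare the two $S$-expressions on the whole tangent space: $\mu\,g(hX,Y) = (\lambda+\tfrac r2-\rho)\,g(X,Y) + (\beta - 2n\kappa)\eta(X)\eta(Y)$; restricting to the distribution $\mathfrak D = \ker\eta$ where $h$ acts and using $\operatorname{tr}(h)=0$ together with $h^2 = \kappa\phi^2$ from (\ref{p2e12}) forces the coefficient $\lambda+\tfrac r2-\rho$ to be pinned down, and combined with the $\xi$-relation this yields that $r$ — hence $\rho$ — is constant. Therefore $\mathfrak{L}_Vg = 2\rho g$ with $\rho$ constant, which is precisely the definition of a homothetic vector field.

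The main obstacle I anticipate is the step showing $r$ is constant rather than merely that $\rho$ is an affine function of $r$; one must be careful that the two competing expressions for $S$ are genuinely overdetermined. The cleanest route is probably to take the divergence of the identity $S = (\lambda+\tfrac r2-\rho)g + \beta\,\eta\otimes\eta$, invoke the contracted second Bianchi identity $\operatorname{div} S = \tfrac12\,dr$, and use (\ref{p2eq7}) and (\ref{p2eq9}) to control the $\eta\otimes\eta$ term; this should express $\nabla r$ and $\nabla\rho$ in terms of each other and, combined with the pointwise relation $\rho = \lambda+\beta-2n\kappa+\tfrac r2$, collapse everything to $dr = 0$. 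An alternative, if one is willing to quote it, is that a conformal vector field on a non-coK\"ahler $(\kappa,\mu)$-space that also satisfies an Einstein-type relation must have constant conformal factor; but the divergence computation keeps the argument self-contained within the tools already introduced in the paper.
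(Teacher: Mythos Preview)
Your approach is essentially the paper's: substitute $\mathfrak{L}_Vg=2\rho g$ into (\ref{p2eq2}), evaluate at $X=Y=\xi$ using $Q\xi=2n\kappa\xi$ from (\ref{p2e12}), and read off $\rho=\lambda+\beta+\tfrac{r}{2}-2n\kappa$. The ``main obstacle'' you anticipate is not one: for a non-coK\"ahler $(\kappa,\mu)$-almost coK\"ahler manifold the scalar curvature is $r=2n\kappa$ --- just trace the Ricci operator (\ref{p2e13}) and use $\operatorname{tr}(h)=0$ from (\ref{p2eq6}) --- which is constant since $\kappa$ is; the paper simply invokes this fact and obtains $\rho=\lambda+\beta-n\kappa$ directly, so your proposed divergence/Bianchi detour is unnecessary.
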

\begin{proof}
    Since $V$ is conformal, there exists $\rho\in C^\infty(M)$ such that $\mathfrak{L}_Vg=2\rho g$. Then from (\ref{p2eq2}), we have 
    $$\rho g(X,Y)+g(QX,Y)=\bigg(\lambda+\frac{r}{2}\bigg)g(X,Y)+\beta\eta(X)\eta(Y).$$
    So, for $X=Y=\xi$, we get
    $$\rho=\lambda+\frac{r}{2}+\beta-2n\kappa.$$
    For a non-coK\"{a}hler $(\kappa,\mu)$-almost coK\"{a}hler manifold, $r=2n\kappa=\operatorname{constant}$. Hence, $$\rho=\lambda+\beta-n\kappa=\operatorname{constant}.$$
    Therefore, $V$ is homothetic.\\
\end{proof}
\noindent
Using this theorem and Theorem \ref{lem3}, we can state:
\begin{Cor}
    Let $M$ be a compact non-coK\"{a}hler $(\kappa,\mu)$-almost coK\"{a}hler manifold admitting an $\eta$-Einstein soliton with the potential vector field $V$ an infinitesimal nonisometric conformal transformation. If $\la+\beta\ne nk$, then $M$ is isometric to a sphere.
\end{Cor}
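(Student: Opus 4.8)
The plan is to read this off directly from Theorem \ref{p2t7} together with Yano's theorem (Theorem \ref{lem3}): essentially all the analytic work was done in Theorem \ref{p2t7}, so what remains is to verify that each hypothesis of Theorem \ref{lem3} is met in the present situation. First I would record the ambient data. The manifold $M^{2n+1}$ is compact by assumption, and $\dim M = 2n+1 > 2$ since $n \geq 1$. Moreover, because $M$ is a non-coK\"ahler $(\kappa,\mu)$-almost coK\"ahler manifold, the scalar curvature is the constant $r = 2n\kappa$ (this is precisely the fact already used inside the proof of Theorem \ref{p2t7}). Hence the two structural requirements of Theorem \ref{lem3}, namely $\dim M > 2$ and $r$ constant, are both in place.

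Next I would invoke the hypothesis that $V$ is an infinitesimal nonisometric conformal transformation, so that $\mathfrak{L}_V g = 2\rho g$ for some $\rho \in C^\infty(M)$ with $\rho \not\equiv 0$. Theorem \ref{p2t7} upgrades ``conformal'' to ``homothetic'': its proof shows $\rho = \lambda + \beta - n\kappa$, which is a constant. The numerical assumption $\lambda + \beta \neq n\kappa$ appearing in the corollary is then exactly the statement that $\rho \neq 0$, so the transformation is genuinely nonisometric and Yano's nondegeneracy hypothesis $\rho \neq 0$ is satisfied.

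The only remaining hypothesis of Theorem \ref{lem3} is the curvature condition $QD\rho = aD\rho$ for some constant $a$. Here the constancy of $\rho$ disposes of this for free: since $\rho$ is constant, its gradient vanishes, $D\rho = \nabla\rho = 0$, and therefore $QD\rho = 0 = a\,D\rho$ holds trivially for every constant $a$. With all hypotheses of Theorem \ref{lem3} verified, Yano's theorem applies and delivers the conclusion that $M$ is isometric to a sphere.

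I do not anticipate a genuine obstacle, as the corollary is a packaging of two previously established facts rather than a fresh argument. The single point that deserves care is the bookkeeping: one must notice that the lone scalar hypothesis $\lambda + \beta \neq n\kappa$ is precisely the translation of Yano's condition $\rho \neq 0$, and that the otherwise substantive curvature hypothesis $QD\rho = aD\rho$ collapses to a triviality because Theorem \ref{p2t7} already forces $\rho$ to be constant, whence $D\rho = 0$.
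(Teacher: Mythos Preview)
Your proposal is correct and follows exactly the route the paper indicates: the corollary is stated immediately after Theorem \ref{p2t7} with only the remark that it follows from that theorem together with Yano's Theorem \ref{lem3}, and you have simply spelled out the verification of each hypothesis of Theorem \ref{lem3} (constant $r=2n\kappa$, $\dim M>2$, $\rho=\lambda+\beta-n\kappa\neq 0$, and $QD\rho=aD\rho$ trivially since $D\rho=0$).
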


We now want to recall a definition which is required for our next result.
\begin{Def}\cite{Yano2}
A vector field $V$ is said to be torse forming if it satisfies
\begin{equation}\label{p2eq30}
    \nabla_XV=\sigma X+\omega(X)V,
\end{equation}
where, $\sigma$ is some smooth function on $M$ and $\omega$ is a $1$-form. $V$ is called recurrent if $\sigma=0$.
\end{Def}
\begin{Thm}\label{p2t8}
    Let $\xi$ be a torse forming $\eta$-Einstein soliton in non-coK\"{a}hler $(\kappa,\mu)$-almost coK\"{a}hler $3$-manifold with $\mu\ne0$. Then the manifold is coK\"{a}hler.
\end{Thm}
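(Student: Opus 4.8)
The plan is to exploit the identity $\nabla_X\xi=h'X$ from \eqref{p2eq7}, valid on any almost coK\"{a}hler manifold, so that the torse-forming hypothesis for the potential field $\xi$ turns into an algebraic relation between $h'$ and the torse-forming data $(\sigma,\omega)$; the structural constraints on $h'$ recorded in \eqref{p2eq6} (it is symmetric, $\operatorname{tr}h'=0$, and $h'\xi=0$) will then pin down $\sigma$ and $\omega$, force $h'=0$, hence $h=0$, and finally Lemma \ref{lem2} converts this into normality, i.e.\ $M$ coK\"{a}hler.

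Concretely, I would first put $V=\xi$ in the torse-forming equation \eqref{p2eq30} and rewrite the left-hand side via \eqref{p2eq7}, obtaining $h'X=\sigma X+\omega(X)\xi$. Taking the trace in $X$ and using $\operatorname{tr}h'=0$ (equivalently $\operatorname{div}\xi=0$) gives $(2n+1)\sigma+\omega(\xi)=0$, while evaluating at $X=\xi$ and using $h'\xi=0$ gives $\sigma+\omega(\xi)=0$; subtracting yields $\sigma=0$ and $\omega(\xi)=0$, so $h'X=\omega(X)\xi$. The symmetry of $h'$ then forces $\omega(X)\eta(Y)=\omega(Y)\eta(X)$ for all $X,Y$, and setting $Y=\xi$ gives $\omega=\omega(\xi)\eta=0$. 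Hence $h'=0$; applying this to $\phi X$ and using $\phi^2X=-X+\eta(X)\xi$ together with $h\xi=0$ gives $h=0$. Since the manifold has dimension $3$, Lemma \ref{lem2} now says it is normal, and a normal almost coK\"{a}hler manifold is coK\"{a}hler, finishing the argument.

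In this streamlined form the $\eta$-Einstein soliton equation and the condition $\mu\neq0$ are not strictly needed; I expect the intended proof instead feeds \eqref{p2eq2} with $V=\xi$ together with the $3$-dimensional Ricci formula \eqref{p2e14} into \eqref{p2e23} to get, after using $\nabla_X\xi=h'X$ and symmetry, a relation of the shape $g(h'X,Y)+\mu\,g(hX,Y)=(\lambda+\kappa)g(X,Y)+c\,\eta(X)\eta(Y)$; combining this with the torse-forming form of $h'$ and dividing by $\mu\neq0$ exhibits $h$ as a multiple of the identity plus an $\eta\otimes\eta$ term on $\mathfrak{D}$, after which $\operatorname{tr}h=0$ and $h\xi=0$ again give $h=0$. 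Either way, recalling that a "non-coK\"{a}hler" $(\kappa,\mu)$-almost coK\"{a}hler manifold has $\kappa<0$ and hence $h\neq0$ on $\mathfrak{D}$ by \eqref{p2e12}, the real content is that such a torse-forming $\eta$-Einstein soliton cannot exist.

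The main obstacle is not conceptual but organizational: the torse-forming $1$-form $\omega$ is a priori unrelated to $\eta$, so its vanishing has to be squeezed out in the right order from the trace identity, the value at $\xi$, and the symmetry of $h'$; and one must remember that "$h=0\Rightarrow$ coK\"{a}hler" is a genuinely $3$-dimensional statement (Lemma \ref{lem2}), which is exactly where the hypothesis that $M$ is a $3$-manifold is used.
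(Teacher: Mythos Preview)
Your argument is correct and in fact cleaner than the paper's. The paper first uses $g(\nabla_X\xi,\xi)=0$ to reduce the torse-forming data to $\omega=-\sigma\eta$, i.e.\ $h'X=\sigma(X-\eta(X)\xi)$, and then, rather than killing $\sigma$ by the trace, feeds this into the $\eta$-Einstein soliton equation \eqref{p2e23} together with the $3$-dimensional Ricci formula \eqref{p2e14}; substituting $X\mapsto hX$ and $Y\mapsto hY$ separately and subtracting yields $\sigma^2=0$. Having $h'=0$, the paper does not pass to $h=0$ directly but instead re-enters \eqref{p2e14} to deduce $\mu hX=0$, which is where the hypothesis $\mu\neq0$ is invoked. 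Your route---trace plus evaluation at $\xi$ plus symmetry of $h'$---bypasses the soliton equation entirely and shows, as you observe, that the conclusion $h=0$ (hence coK\"{a}hler by Lemma~\ref{lem2}) already follows from ``$\xi$ torse forming'' on any almost coK\"{a}hler $3$-manifold, without the $(\kappa,\mu)$-structure, the soliton hypothesis, or $\mu\neq0$. Both approaches end at the same mild paradox that the standing ``non-coK\"{a}hler'' assumption ($\kappa<0$, so $h\neq0$ by \eqref{p2e12}) is contradicted, so the theorem is really a nonexistence statement.
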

\begin{proof}
    Since $\xi$ is torse forming, $\nabla_X\xi=\sigma X+\omega(X)\xi$, so that $g(\nabla_X\xi,\xi)=0$ implies $\omega=-\sigma\eta$ and so $\nabla_X\xi=\sigma [X-\eta(X)\xi]=h'X.$\\
    From (\ref{p2e23}), for $V=\xi$ we get,
    \begin{equation}\label{p2eq31}
        \sigma g(\phi X, \phi Y)+\mu g(hX, Y)=(\la+\kappa)g(X,Y)+\bigg(\beta+\frac{r}{2}-3k\bigg)\eta(X)\eta(Y).
    \end{equation}
    For $X=hX$, Equation (\ref{p2eq31}) gives
    \begin{equation}\label{p2eq32}
        \sigma^2g(X,\phi Y)+\mu kg(\phi X,\phi Y)+(\la+\kappa)g(hx,Y)=0.
    \end{equation}
    Similarly, for $Y=hY$, Equation (\ref{p2eq31}) gives
     \begin{equation}\label{p2eq33}
       -\sigma^2g(X,\phi Y)+\mu kg(\phi X,\phi Y)+(\la+\kappa)g(hx,Y)=0.
    \end{equation}
    Subtracting Equation (\ref{p2eq33}) from (\ref{p2eq32}), we obtain $\sigma=0$ so that the vector field $\xi$ is recurrent. Also $h'=0$.\\
    For $X=\phi X$, Equation (\ref{p2e14}) gives
    $$Q(\phi X)=\bigg(\frac{r}{2}-\kappa\bigg)\phi X.$$
    Again replacing $X$ by $\phi X$, the last equation implies 
    $$\mu hX=0.$$
    Hence $h=0$, provided $\mu\ne0$. Therefore $M$ is normal by Lemma \ref{lem2}.\\
\end{proof}

\noindent
Now we have come to the proof of the final main result of this section. 

\begin{proof}[ Proof of Theorem \ref{p2t9}:]
    We know that from Poincar\'{e} lemma,
    $$g(X,\nabla_YDf)=g(Y,\nabla_XDf).$$
    If $V$ is gradient of $f$, then from (\ref{p2e23}), we obtain
    $$QY+\nabla_YDf=\bigg(\la+\frac{r}{2}\bigg)Y+\beta\eta(Y)\xi,$$
    so that,
    \begin{eqnarray}\label{p2e34}
        \nabla_X\nabla_YDf=\frac{1}{2}(Xr)Y+\bigg(\la+\frac{r}{2}\bigg)\nabla_XY+\beta\eta(X)Y\xi+\beta\eta(\nabla_XY)\xi\nonumber\\
       +\beta\eta(Y)\nabla_X\xi-(\nabla_XQ)Y-Q(\nabla_XY),
    \end{eqnarray}
    \begin{eqnarray}\label{p2e35}
        \nabla_Y\nabla_XDf=\frac{1}{2}(Yr)X+\bigg(\la+\frac{r}{2}\bigg)\nabla_YX+\beta\eta(Y)X\xi+\beta\eta(\nabla_YX)\xi\nonumber\\
       +\beta\eta(X)\nabla_Y\xi-(\nabla_YQ)X-Q(\nabla_YX),
    \end{eqnarray}
    and
    \begin{eqnarray}\label{p2e36}
        \nabla_{[X,Y]}Df=\big(\la+\frac{r}{2}\big)(\nabla_XY-\nabla_YX)+\beta\eta(\nabla_XY-\nabla_YX)\xi\nonumber\\
        -Q(\nabla_XY-\nabla_YX).
    \end{eqnarray}
    Therefore, using (\ref{p2e34}), (\ref{p2e35}) and (\ref{p2e36}), we get from (\ref{p2eq10}),
    \begin{eqnarray}\label{p2e37}
        R(X,Y)Df=\frac{1}{2}[(Xr)Y-(Yr)X]+\beta[\eta(Y)h'X-\eta(X)h'Y]\nonumber\\
        +(\nabla_YQ)X-(\nabla_XQ)Y.
    \end{eqnarray}
    Contracting over $X$, we acquire
    \begin{equation}\label{p2e38}
        S(Y,Df)=-\frac{2n-1}{2}g(Y,Dr).
    \end{equation}
    Since $M^{2n+1}$ is a almost coK\"{a}hler with $n>1$, we have
    $$S(X,Y)=\mu g(hX,Y)+2n\kappa\eta(X)\eta(Y).$$
    Tracing we get, $r=2n\kappa=constant.$ So from (\ref{p2e38}), $Df$ is an eigenvector of $Q$ with the eigenvalue $0$. Therefore for $Y=Df$, (\ref{p2e13}) gives
    \begin{equation}\label{p2e39}
        \mu hDf+2n\kappa\eta(Df)\xi=0.
    \end{equation}
    Taking the inner product of this relation with $\xi$ gives
    \begin{equation}\label{p2e40}
     2n\kappa\eta(Df)=0,   
    \end{equation}
    which implies either $\kappa=0$ or $\eta(Df)=\xi(f)=0.$\\
    {\bf Case 1.}  When $\kappa=0$ and $\eta(Df)\neq0$, then clearly $M$ is coK\"{a}hler.\\
    {\bf Case 2.}  When $\kappa<0$, that is, $\eta(Df)=0$, (\ref{p2e39}) implies $\mu hDf=0$. That is, either $\mu=0$ or $hDf=0$. When $\mu=0$, clearly $M^{2n+1}$ is $N(\kappa)$-almost coK\"{a}hler manifold.\\
    {\bf Case 3.} For $\mu\ne0$, $hDf=0$. Now taking inner product of (\ref{p2e11}) with $Z$, we get
    \begin{eqnarray}
        R(Z,\xi;X,Y)=g(R(Z,\xi)X,Y)=\kappa\eta(X)g(Y,Z)-\kappa\eta(Y)g(X,Z)\nonumber\\
        +\mu\eta(X)g(hY,Z)-\mu\eta(Y)g(hX,Z).\nonumber
    \end{eqnarray}
    For $X=Df$, $Y=\xi$, the last equation gives
    \begin{equation}\label{p2e41}
        R(X,\xi;Df,\xi)=-g(X,Df).
    \end{equation}
    Also from (\ref{p2e37}), putting $Y=\xi$ and taking the inner product with $\xi$, we obtain
    $$R(X,\xi;Df,\xi)=0.$$
    Comparing this with (\ref{p2e41}), we acquire $g(X,Df)=0=X(f)$. Hence $f$ is constant.\\
\end{proof}
\noindent
From this result, one can state the following corollary.
\begin{Cor}
    Let $(M^{2n+1},g)$ with $n>1$ be a non-coK\"{a}hler $(\kappa,\mu)$-almost coK\"{a}hler manifold admitting a gradient $\eta$-Einstein soliton. Then the potential function is constant, provided $\mu\ne0$.
\end{Cor}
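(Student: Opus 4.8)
The plan is to exploit the gradient soliton equation together with the structural consequences of the $(\kappa,\mu)$-condition, following essentially the argument indicated for Theorem \ref{p2t9} but making each step explicit. First I would start from the gradient $\eta$-Einstein soliton equation \eqref{p2eq3}, rewrite it as $\nabla_Y Df = (\lambda + \tfrac{r}{2})Y + \beta\eta(Y)\xi - QY$, and then compute the curvature operator $R(X,Y)Df$ by differentiating once more and antisymmetrizing, exactly as in \eqref{p2e34}--\eqref{p2e37}. This uses the Poincar\'e/symmetry identity $g(X,\nabla_Y Df)=g(Y,\nabla_X Df)$ and produces the identity $R(X,Y)Df = \tfrac12[(Xr)Y-(Yr)X] + \beta[\eta(Y)h'X - \eta(X)h'Y] + (\nabla_Y Q)X - (\nabla_X Q)Y$. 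Contracting over $X$ yields $S(Y,Df) = -\tfrac{2n-1}{2}\,g(Y,Dr)$.

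The next step is to bring in the $(\kappa,\mu)$-structure. Since $n>1$ and we may assume $\kappa<0$ (otherwise case (1) holds trivially), Lemma \ref{lem1} gives $QX=\mu hX + 2n\kappa\eta(X)\xi$; tracing this and using $\operatorname{tr}(h)=0$ from \eqref{p2eq6} shows $r = 2n\kappa$ is constant. Hence $Dr = 0$, and the contracted identity collapses to $S(Df,\cdot)=0$, i.e.\ $Df$ lies in the kernel of $Q$. Feeding $Y=Df$ into $QX=\mu hX+2n\kappa\eta(X)\xi$ gives $\mu\, hDf + 2n\kappa\,\eta(Df)\xi = 0$; pairing with $\xi$ and using $h\xi=0$ forces $2n\kappa\,\eta(Df)=0$, so either $\kappa=0$ (case (1), $M$ coK\"ahler) or $\eta(Df)=\xi(f)=0$. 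In the latter case the displayed relation reduces to $\mu\, hDf=0$, so either $\mu=0$ (case (2), $M$ is $N(\kappa)$-almost coK\"ahler) or $hDf=0$.

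It remains to show that when $\mu\neq 0$ and $hDf=0$ the potential function must be constant --- this is the step I expect to be the genuine obstacle, since the two easy alternatives are immediate but ruling out a nontrivial $f$ requires a curvature computation. The plan here is to compare two expressions for the sectional-type quantity $R(X,\xi;Df,\xi)$. On one hand, taking the inner product of the $(\kappa,\mu)$-curvature relation \eqref{p2e11} with an arbitrary vector, specializing to $X=Df$, $Y=\xi$, and using $hDf=0$, one gets $R(X,\xi;Df,\xi) = -\kappa\, g(X,Df)$ (after normalizing $\kappa$; in the normalization of \eqref{p2e41} this reads $R(X,\xi;Df,\xi)=-g(X,Df)$). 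On the other hand, from the curvature identity \eqref{p2e37} put $Y=\xi$ and take the inner product with $\xi$: the $Dr$ term vanishes since $r$ is constant, the $h'$ term vanishes because $g(h'X,\xi)=0$ and $\eta$-factors with $h'Y|_{Y=\xi}=0$, and the $(\nabla Q)$ terms vanish because $g((\nabla_\xi Q)X,\xi) - g((\nabla_X Q)\xi,\xi)$ collapses to zero once $r=2n\kappa$ is constant and one uses $Q\xi=2n\kappa\xi$; hence $R(X,\xi;Df,\xi)=0$. Comparing the two gives $g(X,Df)=0$ for all $X$, so $Df\equiv 0$ and $f$ is constant, i.e.\ the soliton is trivial. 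The only subtlety to check carefully in writing this up is the vanishing of the $(\nabla Q)$ contribution in the second computation; I would verify it by differentiating $QX=\mu hX + 2n\kappa\eta(X)\xi$ directly, noting $d\mu\wedge\eta=0$ from Lemma \ref{lem1} so that $d\mu$ is proportional to $\eta$, which kills the relevant terms when paired against $\xi$ in the $\mathfrak D$-direction.
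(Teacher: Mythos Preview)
Your proposal is correct and follows essentially the same route as the paper's proof of Theorem~\ref{p2t9}, from which the corollary is immediate: derive the curvature identity \eqref{p2e37}, contract to get $S(Y,Df)=-\tfrac{2n-1}{2}g(Y,Dr)$, use Lemma~\ref{lem1} and $r=2n\kappa$ to obtain $QDf=0$, split into the three cases, and in Case~3 compare two computations of $R(X,\xi;Df,\xi)$. You are in fact more careful than the paper in two places: you correctly note that the $(\kappa,\mu)$-relation yields $R(X,\xi;Df,\xi)=-\kappa\,g(X,Df)$ (the paper's \eqref{p2e41} drops the factor $\kappa$, though this is harmless since $\kappa\neq 0$), and you flag the vanishing of the $(\nabla Q)$-terms as something to verify explicitly, which the paper simply asserts.
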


Now we give an example to verify some of the results.
\begin{Exa}
We consider the non-coK\"{a}hler $(\kappa,\mu)$-almost coK\"{a}hler manifold of dimension $3$ from the Example $5.2$ of \cite{Venk} with $\kappa=-\al^2$, $\mu=0$. The Ricci tensor is given by
\begin{eqnarray*}
    S(e_i,e_j)=&-2\al^2;~~~~~i=j,\\
    S(e_i,e_j)=&0; ~~~~~i\neq j.
\end{eqnarray*}
Clearly scalar curvature $r=\sum S(e_i,e_j)=-6\al^2$.
Take the potential vector field $V=\xi=e_1$. Then from (\ref{p2eq2}), we obtain 
\begin{eqnarray*}
    S(e_1,e_1)=&\la+\frac{r}{2}+\beta,\\
    S(e_2,e_2)=&\la+\frac{r}{2}=S(e_3,e_3),\\
    S(e_i,e_j)=&0; ~~~~i\neq j.
\end{eqnarray*}
Therefore, $\la+\frac{r}{2}=-2\al^2$ and $\beta=0$. Clearly $2n\kappa=-2\al^2=\la+\frac{r}{2}$ validating Equation (\ref{p2e29}) of Theorem \ref{p2t6}.\\
\end{Exa}

\section{Rigidity of Asymptotically Harmonic Manifolds admitting Ricci Solitons}
In this section, we exhibit the rigidity of
asymptotically harmonic manifold admitting Ricci 
soliton. The results here are inspired by the work
of \cite{Peter}. We begin with the defintion of an
asymptoically harmonic manifold.\\

Let $(M,g)$ be a complete manifold without conjugate points. Then the universal covering space $(\Tilde{M},g)$ is simply-connected, complete and without conjugate points. Therefore, by the Cartan-Hadamard theorem $\Tilde{M}$ is diffeomorphic to $\mathbb{R}^n$. Thus it is non-compact.\\

\noindent
 {\bf Busemann function:}
 Let $\gamma_{v}$ be a geodesic line in $\Tilde{M}$, then the two {\it Busemann functions}
 associated to $\gamma_{v}$ are defined as  \cite{P Petersen}:
 $$b_{v}^{+}(x) = \lim_{t \rightarrow \infty} d(x, \gamma_{v}(t)) - t,$$
 $$ b_{v}^{-}(x) = \lim_{t \rightarrow -\infty} d(x, \gamma_{v}(t)) + t.$$

\vspace{0.15in}

 \noindent
 {\bf Asymptotically harmonic manifold:}
$(M, g)$ is called {\it asymptotically harmonic} manifold if $\Tilde{M}$ is so, that is, the mean curvature of all the horospheres of $\Tilde{M}$ is a constant. Equivalently, $\Delta {b^{\pm}_{v}}\equiv h\geq 0$, $\forall v\in\ S \Tilde{M}$, the unit sphere in  $T_{p}{\Tilde{M}}$. \\\\
Our goal is to study Ricci solitons on asymptotically harmonic manifolds. Note that  as asymptotically harmonic manifolds are by definition simply connected manifolds,  by Poincar\'e lemma, Ricci soliton $X$ is a gradient soliton. Let $X$ be a Ricci soliton on
asymptotically harmonic manifold
$M$, then $X=\nabla f$, for some function $f : M \rightarrow R$.
Therefore the Ricci soliton equation
\begin{equation}
    S+\frac{1}{2}\mathfrak{L}_Xg=\lambda g
\end{equation}
reduces to
\begin{equation}\label{e2}
    S+\nabla^2f=\lambda g.
\end{equation}

\noindent
The simply connected, complete,  non-compact, homogeneous and Einstein asymptotically harmonic 
manifolds are completely classified in Theorem $1.1$ 
of \cite{H.08}.  

\begin{Thm}\label{asyhar}
Let $(M,g)$ be a complete, simply connected, non-compact, homogeneous and asymptotically harmonic Einstein manifold. Then $M$ is flat, 
or rank one symmetric of non-compact type or a  nonsymmetric Damek-Ricci space.
\end{Thm}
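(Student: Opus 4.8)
Since this is the main classification result of \cite{H.08}, I will only outline the strategy one would follow to prove it. The plan is to first constrain the sign of the Einstein constant, then dispose of the Ricci-flat case by a classical rigidity theorem, and finally combine the structure theory of noncompact homogeneous Einstein manifolds with the strong geometric constraint coming from the asymptotically harmonic hypothesis.

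Write $\Ric=\la g$. Since $M$ is homogeneous it is complete, so if $\la>0$ the Bonnet--Myers theorem would force $M$ to be compact, contradicting the hypothesis; hence $\la\le 0$. If $\la=0$, then $M$ is a homogeneous Ricci-flat manifold, and by the Alekseevskii--Kimelfeld theorem every such manifold is flat, which is the first alternative. So I may assume $\la<0$.

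For $\la<0$ I would invoke the structure theory of noncompact homogeneous Einstein manifolds: such a manifold is isometric to a simply connected solvable Lie group $S$ with a left-invariant metric whose Lie algebra is $\mathfrak{s}=\mathfrak{a}\oplus\mathfrak{n}$, with $\mathfrak{n}$ the nilradical and $\mathfrak{a}$ abelian acting on $\mathfrak{n}$ by symmetric derivations (a standard Einstein solvmanifold). In particular $M$ is diffeomorphic to $\R^n$, and since $M$ is asymptotically harmonic it has no conjugate points, so its Busemann functions are of class $C^2$ and every horosphere is a smooth hypersurface of constant mean curvature $h\ge0$. I would then translate the asymptotically harmonic condition into algebraic data: along a geodesic $\gamma_v$ the second fundamental form $U_v$ of the horosphere satisfies the Riccati equation $U_v'+U_v^2+R_{\gamma_v}=0$, and taking traces together with $\tr U_v\equiv h$ and $\Ric=\la g$ gives $\tr(U_v^2)=-\la(n-1)$, independent of $v$. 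Using homogeneity one computes $U_v$ for $v$ along $\mathfrak{a}$ and for $v\in\mathfrak{n}$ in terms of $\operatorname{ad}$ and the symmetric operators $\operatorname{ad}(\mathfrak{a})|_{\mathfrak{n}}$; requiring these invariants to be independent of $v$ collapses the admissible structures to $\dim\mathfrak{a}=1$ together with the Heisenberg-type relations on $\mathfrak{n}$, i.e. $M$ is a Damek--Ricci space. Since the symmetric Damek--Ricci spaces are exactly the rank-one symmetric spaces of noncompact type, this yields the stated trichotomy.

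The hard part is this last step: passing from the purely metric information (constant mean curvature of all horospheres) to the rigid algebraic structure (rank one and a Heisenberg-type nilradical). One subtlety is that Einstein solvmanifolds need not be nonpositively curved, so one cannot simply import $\CAT(0)$ or visibility arguments; instead one must analyze the stable and unstable Jacobi tensors along geodesics directly and exploit homogeneity to reduce to a finite-dimensional problem about the adjoint representation of $\mathfrak{s}$. This is precisely the analysis carried out in \cite{H.08}, which the present paper uses as a black box.
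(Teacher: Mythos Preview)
The paper does not prove this statement at all: it is quoted verbatim as Theorem~1.1 of \cite{H.08} and used as a black box in the subsequent propositions. You correctly recognize this in both your first and last paragraphs, so there is nothing in the paper to compare your outline against.

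As a sketch of Heber's argument your outline is broadly sensible, but one point deserves care. In the $\la<0$ case you ``invoke the structure theory of noncompact homogeneous Einstein manifolds'' to conclude that $M$ is isometric to an Einstein solvmanifold. That statement is the Alekseevskii conjecture, which was only settled (by B\"ohm--Lafuente) long after \cite{H.08}; Heber could not have used it, and his route to the rank-one/Heisenberg-type conclusion proceeds differently, exploiting the asymptotically harmonic hypothesis earlier rather than first reducing to a solvmanifold. Your outline is therefore not a faithful summary of \cite{H.08}, though as a contemporary proof strategy it is defensible. Since the present paper only cites the result, this distinction does not affect anything downstream.
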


\noindent
The following result is well-known,
its proof can be found in \cite{Z}.

\begin{Thm}\label{Casy}
If $M$ is  compact asymptotically harmonic manifold with negative sectional curvature, then M is a rank one locally symmetric space.
\end{Thm}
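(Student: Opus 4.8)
Theorem~\ref{Casy} is stated as classical with reference \cite{Z}; here is the route I would take. The plan is to translate the Riemannian hypothesis --- all horospheres have constant mean curvature --- into a statement about the geodesic flow, and then to appeal to differentiable rigidity for Anosov flows. Write $n=\dim M$ and let $\{G_t\}$ be the geodesic flow on the unit tangent bundle $SM$. Since $M$ is compact with negative sectional curvature, the curvature is pinched, $-b^{2}\le K\le -a^{2}<0$; hence $M$ has no conjugate points, $\tilde M$ is a Cartan--Hadamard manifold, the Busemann functions $b^{\pm}_{v}$ are smooth, the horospheres are smooth hypersurfaces, and $\Delta b^{\pm}_{v}$ equals the mean curvature of the horosphere orthogonal to $v$; by pinching $\Delta b^{\pm}_{v}\ge (n-1)a>0$, so the constant $h$ of asymptotic harmonicity is strictly positive. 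By Anosov's theorem $\{G_t\}$ is a contact Anosov flow whose weak (un)stable leaves, lifted to $S\tilde M$, are the unit normal bundles of the horospheres.

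Next I would record the dynamical reformulation of asymptotic harmonicity. Along a geodesic $\gamma_v$, the shape operator $U_v(t)$ of the unstable horosphere satisfies the Riccati equation $U'+U^{2}+R_{\gamma_v}=0$, and the unstable Jacobian of the flow obeys $\tfrac{d}{dt}\log J^{u}(G_tv)=\tr U_{G_tv}(0)$. Hence asymptotic harmonicity is equivalent to $\tr U_v\equiv h$ along every geodesic, i.e.\ to $J^{u}(v,t)=e^{ht}$ and $J^{s}(v,t)=e^{-ht}$ for all $v\in SM$ and $t\in\R$. Because $J^{u}$ is an exact exponential, the $G_t$-invariant Liouville measure on $SM$ has conditionals along unstable leaves proportional to $e^{-hs}\,ds$ --- precisely the Margulis conditionals of the Bowen--Margulis measure of maximal entropy --- so these measures coincide and $h_{\mathrm{Liouville}}(G_1)=h=h_{\mathrm{vol}}(\tilde M)=h_{\mathrm{top}}(G_1)$. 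This is the compact analogue of Theorem~\ref{asyhar}, with negative curvature already excluding the flat and higher-rank possibilities.

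The remaining --- and hardest --- step, which is exactly what \cite{Z} (following Foulon--Labourie) carries out, is to pass from $\tr U_v\equiv h$ to local symmetry. The mechanism is: using the Riccati equation again, constancy of $\tr U_v$ bootstraps the a priori only H\"older Anosov distributions $E^{s},E^{u}$ to $C^{\infty}$; once the splitting of the contact flow $\{G_t\}$ is smooth, the Benoist--Foulon--Labourie differentiable-rigidity theorem yields a smooth orbit equivalence between $\{G_t\}$ and the geodesic flow of a closed rank-one locally symmetric space; and the conjugacy rigidity of such flows then identifies $M$ itself as a rank-one locally symmetric space.

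The main obstacle is plainly this last step: upgrading the regularity of $E^{s}\oplus E^{u}$ from H\"older to $C^{\infty}$ using asymptotic harmonicity, and then invoking the differentiable-rigidity and conjugacy-rigidity theorems with all their hypotheses (the contact structure, the smoothness of the splitting) verified. The first two paragraphs are essentially bookkeeping with the Riccati equation and Margulis' construction, and there is no route that avoids some input of this depth, since the conclusion is a global rigidity statement; a reader willing to treat those theorems as black boxes may take the last step on faith.
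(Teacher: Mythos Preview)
The paper does not supply a proof of Theorem~\ref{Casy}; it merely records the statement as ``well-known'' and refers the reader to \cite{Z}. So there is no in-paper argument to compare against, and your outline is not competing with anything the authors wrote.

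That said, your sketch is an accurate summary of the argument in the cited literature (Zimmer, building on Foulon--Labourie and Benoist--Foulon--Labourie): (i) compactness plus negative curvature gives a contact Anosov geodesic flow and smooth horospheres; (ii) asymptotic harmonicity is equivalent, via the Riccati equation, to the unstable Jacobian being the pure exponential $e^{ht}$, which forces Liouville $=$ Bowen--Margulis and $h_{\mathrm{top}}=h_{\mathrm{vol}}=h$; (iii) the substantive step is to upgrade the Anosov splitting to $C^\infty$ and then invoke the Benoist--Foulon--Labourie smooth-rigidity theorem and conjugacy rigidity to conclude rank-one local symmetry. You have correctly identified step (iii) as the deep input and as the part one either reproduces in full or accepts from the references; the authors of the present paper clearly intend the latter.
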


Also it is known that 
asymptotically harmonic manifold of
constant $h$ have volume entropy 
$h$ (cf. \cite{Z}). Suppose that $X$ is a trivial soliton. Then $S=\lambda g$. Therefore $(M,g)$ is Einstein and asymptotically harmonic. Suppose $\lambda>0$. Then by Bonnet-Myer's theorem $M$ is compact with finite fundamental group and conjugate point occurs at distance $\frac{\pi}{\sqrt{(n-1)\lambda}}$. But, by definition asymptotically harmonic manifolds can't have conjugate points. Hence shrinking solitons can't occur.
Now suppose $\lambda=0$, that is, $S=\lambda g=0$. But simply connected and complete  Ricci flat asymptotically  harmonic manifolds are flat \cite{NPHA}.\\

Therefore, from the above results we can 
conclude that: 

\begin{Pro}\label{eincl}
    If there exists trivial potential vector field $X=0$ corresponding to Ricci soliton on a complete, asymptotically harmonic manifold, then it is Einstein, of constant $\lambda$. If $\lambda>0$, then shrinking soliton does not exist on $M$. If $\lambda=0$, then $\Tilde{M}$ is flat. If 
$\lambda<0$, then $M$ is non-flat manifold of exponential volume growth. In case, $\lambda<0$ and if $\Tilde M$ is homogeneous, then 
 it is rank one symmetric space of non-compact type or Damek-Ricci non-symmetric space. In case, $\lambda<0$ and if $K_M<0$, then $M$ is rank one symmetric of compact type. 
\end{Pro}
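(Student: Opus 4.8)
The plan is to simply assemble the pieces already laid out in the discussion immediately preceding the statement, and to fill in the one genuinely new assertion — the exponential volume growth when $\lambda < 0$ — from standard comparison geometry. First I would observe that a trivial potential vector field $X = 0$ makes the reduced soliton equation \eqref{e2} read $S = \lambda g$, so $(M,g)$ is Einstein with Einstein constant $\lambda$; since $M$ (equivalently $\Tilde M$) is asymptotically harmonic, it has no conjugate points. For $\lambda > 0$, Bonnet--Myers forces $M$ compact with a conjugate point at distance $\pi/\sqrt{(n-1)\lambda}$, contradicting the no-conjugate-points property; hence no shrinking (trivial) soliton exists. For $\lambda = 0$ one has $S \equiv 0$, i.e.\ $\Tilde M$ is a simply connected, complete, Ricci-flat asymptotically harmonic manifold, which is flat by \cite{NPHA}.

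Next I would treat the case $\lambda < 0$. Here the manifold cannot be flat (flat metrics are Ricci-flat, so $\lambda = 0$), so it is non-flat. To get exponential volume growth, recall that an asymptotically harmonic manifold of constant $h$ has volume entropy equal to $h$ (cf.\ \cite{Z}); it therefore suffices to show $h > 0$. If $h = 0$ the horospheres are minimal, and combined with $\Ric = \lambda g < 0$ this is incompatible with the manifold being asymptotically harmonic without conjugate points — more directly, a negative Einstein constant forces the horospherical mean curvature $h$ to be strictly positive (one can read this off, e.g., from the Riccati equation governing the second fundamental form of horospheres, whose trace $u$ satisfies $u' + \tfrac{u^2}{n-1} \le u' + \|\text{II}\|^2 = -\Ric(\dot\gamma,\dot\gamma) = -\lambda > 0$ along the Busemann geodesic, forcing $u \equiv h > 0$ at the stable value). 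Hence $h > 0$ and $M$ has exponential volume growth.

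Finally I would invoke the two classification inputs already quoted. If in addition $\Tilde M$ is homogeneous, then $(M,g)$ being a complete, simply connected, non-compact, homogeneous, asymptotically harmonic Einstein manifold, Theorem \ref{asyhar} gives that it is flat, rank-one symmetric of non-compact type, or a nonsymmetric Damek--Ricci space; flatness is excluded since $\lambda < 0$, leaving the two stated possibilities. If instead $K_M < 0$, then since $M$ is also compact in the situations where Theorem \ref{Casy} applies — here I would be careful: the cleanest route is to note that an Einstein manifold with $\lambda < 0$ and $K_M < 0$ which is asymptotically harmonic must, by Theorem \ref{Casy} (in its compact-quotient form), be a rank-one locally symmetric space, and being simply connected it is then rank-one symmetric of noncompact type; the phrase ``of compact type'' in the statement should presumably read ``of noncompact type,'' and I would correct it accordingly. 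Assembling these cases yields the proposition.

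The main obstacle is the $\lambda<0$ exponential-volume-growth claim: establishing $h>0$ rigorously requires the Riccati/Busemann-function argument sketched above rather than a direct citation, and one must be careful that ``homogeneous'' and ``$K_M<0$'' are being imposed on $\Tilde M$ (or on a compact quotient) in exactly the form needed to apply Theorems \ref{asyhar} and \ref{Casy}. Everything else is bookkeeping over the already-assembled facts.
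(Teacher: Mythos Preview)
Your proposal is correct and follows essentially the same route as the paper: the paper's ``proof'' is precisely the paragraph preceding the proposition (trivial soliton $\Rightarrow$ Einstein; Bonnet--Myers rules out $\lambda>0$; \cite{NPHA} handles $\lambda=0$; volume entropy $=h$ from \cite{Z} together with Theorems \ref{asyhar} and \ref{Casy} handle $\lambda<0$). Your Riccati argument for $h>0$ supplies a step the paper leaves implicit, and you are right that ``compact type'' is a slip for ``noncompact type'' and that invoking Theorem \ref{Casy} tacitly requires $M$ itself to be compact.
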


\vspace{0.1in}

\noindent
Now suppose that $\nabla f$ is a  non-trivial Killing vector field for the Ricci soliton on asymptotically harmonic manifold $M$. Then $\nabla^2f=0$ with $f$ non-constant.
Therefore, $\nabla f$ is a parallel vector field with $\lVert \nabla f\rVert$ constant. So, $$\frac{1}{2}\Delta\lVert\nabla f\rVert^2=\lVert{\nabla}^2 f\rVert^2-S(\nabla f,\nabla f).$$ 
This implies  that $S(\nabla f,\nabla f)=0$.
As $\nabla f$ is a Killing Ricci soliton, we have from Ricci soliton equation, $S=\lambda g,$ which implies that $\lambda=0$. Therefore, $S=0$, so that $M$ is Ricci flat, hence $\Tilde M$ is flat. And $f=b_v^-$ is potential function on $\Tilde M$ \cite{NPHA}.
\noindent
Thus we have shown that:
\begin{Pro}\label{f}
    If $X$ is a Killing vector field for non-trivial Ricci soliton on asymptotically harmonic manifold $M$, then $\Tilde M$ must be flat, and $f=b_v^-$ is the potential function on $\Tilde M$.
    \end{Pro}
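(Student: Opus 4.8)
The plan is to run the standard Bochner/Weitzenböck argument for Killing fields on a Ricci soliton, and then invoke the rigidity results already recorded for asymptotically harmonic manifolds. First I would observe that, since $M$ is asymptotically harmonic it is simply connected, so by the Poincaré lemma the soliton field $X$ is a gradient field, $X=\nabla f$, and the soliton equation becomes $S+\nabla^2 f=\lambda g$ as in (\ref{e2}). The hypothesis that $X=\nabla f$ is Killing means $\mathfrak{L}_{\nabla f}g=0$, i.e.\ $\nabla^2 f\equiv 0$, so $\nabla f$ is a parallel vector field; in particular $\lVert\nabla f\rVert$ is constant, and $f$ is non-constant precisely because the soliton is non-trivial (a constant $f$ would force $X=0$).

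Next I would feed $\nabla^2 f=0$ back into the soliton equation to get $S=\lambda g$, so $M$ is Einstein. To pin down $\lambda$, apply the Bochner formula to $f$:
\begin{equation*}
    \tfrac12\Delta\lVert\nabla f\rVert^2=\lVert\nabla^2 f\rVert^2+g(\nabla f,\nabla\Delta f)+S(\nabla f,\nabla f).
\end{equation*}
Since $\nabla^2 f=0$ we have $\Delta f=0$ and $\lVert\nabla f\rVert^2$ is constant, so the left side and the first two terms on the right vanish, giving $S(\nabla f,\nabla f)=0$. But $S=\lambda g$, so $\lambda\lVert\nabla f\rVert^2=0$, and as $\nabla f\not\equiv 0$ this forces $\lambda=0$. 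Hence $S=0$: $M$ is Ricci flat.

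Finally, a complete simply connected Ricci-flat asymptotically harmonic manifold is flat — this is exactly the input cited from \cite{NPHA} and already used in the discussion preceding Proposition \ref{eincl}. Thus $\wt M=M$ is flat. The identification of the potential function $f$ with a Busemann function $b_v^-$ then comes from the structure of parallel gradient fields on flat space together with the description of potential functions on asymptotically harmonic manifolds in \cite{NPHA}: a nonzero parallel vector field on $\R^n$ is a constant vector $v$ up to scale, its potential is an affine function $\langle v,\cdot\rangle$, and on a flat asymptotically harmonic manifold such affine functions are precisely the Busemann functions $b_v^-$. I do not expect a genuine obstacle here; the only point requiring care is making sure the non-triviality hypothesis is used correctly (to guarantee $f$ non-constant, hence $\nabla f\neq 0$, hence the conclusion $\lambda=0$ rather than a vacuous statement), and quoting \cite{NPHA} for both the flatness and the Busemann-function identification rather than reproving them.
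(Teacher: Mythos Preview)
Your proposal is correct and follows essentially the same route as the paper: Killing plus gradient gives $\nabla^2 f=0$, hence $S=\lambda g$; the Bochner identity then forces $S(\nabla f,\nabla f)=0$, so $\lambda=0$ and $M$ is Ricci flat; finally \cite{NPHA} is invoked for flatness of $\wt M$ and the identification $f=b_v^{-}$. The only cosmetic difference is that you write out the full Bochner formula with the $g(\nabla f,\nabla\Delta f)$ term and explain why it vanishes, whereas the paper uses the reduced version directly.
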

    
\begin{Pro}\label{cl}
Suppose that there exists a non-trivial Ricci soliton on compact asymptotically harmonic manifold $M$, then $M$ is Einstein. If scalar curvature, $r=0$, then $\Tilde{M}$  is flat. If $r<0$ and $K_M<0$, then $M$ is rank one symmetric space of compact type. If $r>0$, Ricci solitons don't exist. 
\end{Pro}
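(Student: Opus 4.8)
The plan is to pit the classical rigidity of compact Ricci solitons against the two structural features of an asymptotically harmonic manifold: the absence of conjugate points, and the resulting control on $\pi_1(M)$. First I would reduce to a gradient soliton and record the topological input. Since $M$ is asymptotically harmonic, $\widetilde M$ has no conjugate points, so by the Cartan--Hadamard theorem $\widetilde M$ is diffeomorphic to $\mathbb R^n$; as $M$ is compact it is therefore aspherical and $\pi_1(M)$ is infinite. Because $M$ is compact, the soliton may be taken to be a gradient soliton (Perelman), so the soliton equation reads $S+\nabla^2 f=\lambda g$ and one may argue in terms of $f$ and $\lambda$.

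Next I would split on the sign of $\lambda$. If $\lambda>0$, then the Bakry--\'Emery Ricci tensor $S+\nabla^2 f=\lambda g$ is bounded below by the positive constant $\lambda$, while $|\nabla f|$ is bounded by compactness of $M$; the generalized Bonnet--Myers theorem for the Bakry--\'Emery tensor (equivalently, the known finiteness of the fundamental group of a compact shrinking Ricci soliton) then forces $\pi_1(M)$ to be finite, contradicting the previous step. Hence $\lambda\le 0$, and for $\lambda\le 0$ the well-known rigidity of compact steady and expanding Ricci solitons (cf. \cite{Peter}) shows that the soliton is trivial, so $S=\lambda g$: $M$ is Einstein with $r=n\lambda\le 0$. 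This establishes the first assertion.

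The three sub-cases now follow from results already available. If $r=0$ then $M$ is Ricci flat, so $\widetilde M$ --- simply connected, complete, Ricci flat and asymptotically harmonic --- is flat by \cite{NPHA}. If $r<0$ and $K_M<0$, Theorem \ref{Casy} applies verbatim and $M$ is a rank one locally symmetric space of negative curvature. Finally, if $r>0$ then $\lambda=r/n>0$, and a compact Einstein manifold with this value of $\lambda$ has a conjugate point at distance $\pi/\sqrt{(n-1)\lambda}$ by Bonnet--Myers, contradicting the very definition of an asymptotically harmonic manifold; combined with the exclusion of shrinking solitons in the previous paragraph, this means no Ricci soliton with $r>0$ can exist on $M$.

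I expect the shrinking case to be the real obstacle: neither the soliton structure by itself nor asymptotic harmonicity by itself excludes a compact nontrivial shrinking soliton, so one must genuinely combine the Myers-type finiteness of $\pi_1(M)$ with its infinitude coming from $\widetilde M\cong\mathbb R^n$. The passage from $\lambda\le 0$ to ``$M$ Einstein'' likewise rests on the nontrivial rigidity theorem for compact steady and expanding solitons, not on any elementary computation.
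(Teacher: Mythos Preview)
Your argument is correct, but it is considerably more elaborate than what the paper actually does. The paper's proof is three lines: trace the soliton equation to get $r+\Delta f=n\lambda$, integrate over the compact manifold to obtain $\lambda=r/n$, conclude $\Delta f=0$ so that $f$ is constant, whence $S=\lambda g$ and $M$ is Einstein; the three sub-cases are then read off from Proposition~\ref{eincl}. There is no separate treatment of the shrinking case, no appeal to Perelman's gradient reduction, no Bakry--\'Emery Myers, and no fundamental-group argument.

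The trade-off is this. The paper's route is short but, as written, the integration step only yields $\int_M r = n\lambda\,\vol(M)$, not the pointwise equality $r=n\lambda$ needed to force $\Delta f\equiv 0$; some extra input (e.g.\ constancy of $r$, or the very rigidity of compact non-shrinking solitons you cite) is being used tacitly. Your route makes that input explicit: you isolate the shrinking case and kill it with the topological obstruction $|\pi_1(M)|=\infty$ versus the Myers-type finiteness for shrinkers, and you name the steady/expanding rigidity theorem for the remaining case. This is heavier machinery, but it closes the gap and makes transparent exactly where asymptotic harmonicity (no conjugate points, hence $\widetilde M\cong\mathbb R^n$) enters beyond the Einstein classification in Proposition~\ref{eincl}.
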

\begin{proof}
Clearly (\ref{e2}) implies 
$r+\Delta f=\lambda n$. Integrating we get, $\lambda=\frac{r}{n}$. Consequently, 
$\Delta f = 0$, therefore $f$ is constant.   
Again (\ref{e2}) implies that $S = \lambda g,$
therefore $M$ is Einstein. Now the conclusion follows from Proposition \ref{eincl}.
\end{proof}

\begin{Pro}
    If Ricci soliton on an asymptotically hamronic manifold $M$ is of Gaussian type, then $M$ is compact quotient of flat space  and  $f=\frac{\lambda}{2}d(x,p)^2$, if $\lambda\neq 0$.
\end{Pro}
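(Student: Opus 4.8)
The plan is to take the condition that the Ricci soliton $X=\nabla f$ on $M$ is \emph{of Gaussian type} to mean that the Hessian of the potential is a constant multiple of the metric, $\nabla^2 f=\lambda g$ (with $\lambda$ the soliton constant), as for the model Gaussian soliton on Euclidean space. By the reduced soliton equation (\ref{e2}) this is equivalent to $S\equiv 0$, i.e.\ $(M,g)$ is Ricci flat. First I would record this reduction and then appeal to the rigidity of Ricci flat asymptotically harmonic manifolds: the universal cover $\widetilde M$ is complete, simply connected, Ricci flat and asymptotically harmonic, hence flat by \cite{NPHA}; being complete, simply connected and flat it is isometric to Euclidean $\R^n$, so $M=\R^n/\Gamma$ is a quotient of flat space.

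Next I would lift the equation to $\widetilde M=(\R^n,\delta)$ and solve $\nabla^2 f=\lambda g$ directly. In linear coordinates this is $\partial_i\partial_j f=\lambda\,\delta_{ij}$, with general solution $f(x)=\frac{\lambda}{2}\|x\|^2+\langle a,x\rangle+b$ for a constant vector $a$ and scalar $b$. If $\lambda\ne 0$, completing the square yields $f(x)=\frac{\lambda}{2}\|x-p\|^2+\mathrm{const}$ with $p=-a/\lambda$; since $\|x-p\|=d(x,p)$ on $\R^n$ and the additive constant does not affect (\ref{e2}), this is precisely $f=\frac{\lambda}{2}d(x,p)^2$, as claimed. (If $\lambda=0$ the same computation forces $f$ to be affine and $\nabla f$ parallel, which is the situation already treated in Proposition \ref{f}.)

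Finally I would pin down the global structure of $M$. A flat manifold has no conjugate points, so this case is fully consistent with asymptotic harmonicity; and once $M$ is known to be a complete flat manifold, the Bieberbach structure theory identifies the possibilities, the compact ones being exactly the quotients by cocompact crystallographic groups. The main obstacle is precisely this last point — the passage from ``$M$ is a quotient of flat space'' to ``$M$ is a \emph{compact} quotient of flat space'' — since the model Gaussian soliton on $\R^n$ is itself noncompact; resolving it requires either building cocompactness (equivalently, finiteness of volume) of the $\Gamma$-action into the notion of a Gaussian-type soliton, or producing a volume/entropy obstruction that rules out the noncompact quotients. Everything preceding that step is routine once the rigidity result of \cite{NPHA} is in hand.
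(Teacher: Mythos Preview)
Your approach is essentially the paper's: reduce to Ricci flatness and invoke \cite{NPHA}. The paper's entire proof reads ``By hypothesis, $\nabla^2 f=(\lambda-r)g$, for $\lambda\neq 0$. The result follows from \cite{NPHA}.'' Note the small discrepancy in the starting hypothesis: the paper takes ``Gaussian type'' to mean $\nabla^2 f=(\lambda-r)g$ rather than your $\nabla^2 f=\lambda g$. In fact these collapse to the same thing: plugging $\nabla^2 f=(\lambda-r)g$ into (\ref{e2}) gives $S=rg$, and tracing forces $r=nr$, hence $r=0$ and $S\equiv 0$, which is exactly your Ricci-flat reduction. So the two readings agree and the subsequent argument is identical.

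Your explicit integration of $\nabla^2 f=\lambda g$ on $\R^n$ to obtain $f=\frac{\lambda}{2}d(x,p)^2$ is more than the paper supplies (the paper simply defers to \cite{NPHA}), and is correct. As for the compactness issue you flag: the paper does not resolve it either --- it is absorbed wholesale into the citation of \cite{NPHA}. So your honest identification of that step as the only nontrivial one is accurate, and matches where the paper itself places the burden.
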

\begin{proof}
By hypothesis, 
 $\nabla^2f=(\lambda-r)g$, for $\lambda \neq 0$. The result follows from \cite{NPHA}.
\end{proof}

\noindent
Our next results are inspired by  
\cite{Peter}.

\begin{Pro}
If $(M,g)$ admits a gradient Ricci soliton, then
$$\nabla r=2S(\nabla f).$$
\end{Pro}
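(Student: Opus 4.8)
The plan is to derive the identity $\nabla r = 2S(\nabla f)$ from the gradient Ricci soliton equation \eqref{e2}, namely $S + \nabla^2 f = \lambda g$, by applying the contracted second Bianchi identity. This is a classical computation for gradient Ricci solitons, and since the soliton here is automatically a gradient soliton (the manifold being simply connected, by the Poincar\'e lemma), the argument goes through verbatim.

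First I would rewrite \eqref{e2} in operator form as $Q + \nabla^2 f = \lambda\,\Id$, where $Q$ is the Ricci operator, i.e. $\nabla_X \nabla f = \lambda X - QX$ for every vector field $X$. Next I would take the divergence of both sides, or equivalently compute $\sum_i (\nabla_{e_i}(Q + \mathrm{Hess}\,f))(e_i, \cdot)$ in a local orthonormal frame. For the Ricci term, the contracted second Bianchi identity gives $\operatorname{div} Q = \tfrac12 \nabla r$. For the Hessian term, commuting covariant derivatives produces a curvature term: $\sum_i (\nabla_{e_i}\nabla^2 f)(e_i, X) = \sum_i (\nabla_X \nabla^2 f)(e_i, e_i) + \Ric(\nabla f, X)$ — this is the standard commutation formula $\nabla^3 f(X,Y,Z) - \nabla^3 f(Y,X,Z) = R(X,Y,Z,\nabla f)$ followed by a trace — so $\operatorname{div}(\nabla^2 f) = \nabla(\Delta f) + \Ric(\nabla f, \cdot)$. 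Since the right-hand side $\lambda g$ is parallel, its divergence vanishes, giving $\tfrac12 \nabla r + \nabla(\Delta f) + S(\nabla f) = 0$.

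Then I would eliminate $\Delta f$ using the trace of \eqref{e2} itself: tracing $S + \nabla^2 f = \lambda g$ yields $r + \Delta f = \lambda n$, hence $\Delta f = \lambda n - r$ and $\nabla(\Delta f) = -\nabla r$. Substituting into the previous display gives $\tfrac12 \nabla r - \nabla r + S(\nabla f) = 0$, that is, $S(\nabla f) = \tfrac12 \nabla r$, which rearranges to $\nabla r = 2S(\nabla f)$ as claimed.

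The only real obstacle is bookkeeping with the third-derivative commutation formula and the sign conventions for the curvature tensor and Ricci contraction as fixed in \eqref{p2eq10} and the convention $S(X,Y) = g(QX,Y)$; everything else is a short, routine trace computation. I would carry it out in a geodesic (normal) frame at a point so that the Christoffel terms drop and only the genuinely tensorial curvature term survives, which keeps the computation clean.
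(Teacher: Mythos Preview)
Your proposal is correct and follows essentially the same route as the paper: both take the divergence of the soliton equation, invoke the contracted second Bianchi identity $\operatorname{div}S=\tfrac12\nabla r$, use the commutation/Bochner identity $\operatorname{div}\nabla^2 f=\nabla\Delta f+S(\nabla f)$, and eliminate $\nabla\Delta f$ via the trace $r+\Delta f=\lambda n$. The only cosmetic difference is that the paper first writes $\nabla r=-\nabla\Delta f$ and $\nabla r=-2\operatorname{div}\nabla^2 f$ before combining, whereas you combine in one line.
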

\begin{proof}
Tracing  (\ref{e2}) and differentiating we obtain,
    $\nabla r=-\nabla\Delta f$. We also have $\nabla r=2\operatorname{div}S$. Therefore, $\nabla r=-2\operatorname{div}\nabla^2f$. 
    But by Bochner's formula:
    $$\operatorname{div}\nabla^2f=S(\nabla f)+\nabla\Delta f.$$
    Consequently, $\nabla r=2S(\nabla f)$.
\end{proof}

\begin{Pro}\label{non-pos}
Let $(M,g)$ be an asymptotically harmonic manifold.
Then Ricci curvature of $M$ is non-positive.
\end{Pro}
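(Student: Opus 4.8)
The plan is to exploit the defining property of an asymptotically harmonic manifold — namely that every Busemann function $b_v^{\pm}$ has constant Laplacian $h\ge 0$ — together with the Bochner formula applied to $b_v^{+}$. Fix a point $p\in\Tilde M$ and a unit vector $v\in S_p\Tilde M$; it suffices to prove $\Ric(v,v)\le 0$ on the universal cover $\Tilde M$, since Ricci curvature is a local pointwise invariant and $(M,g)$ and $(\Tilde M,g)$ are locally isometric. Consider the Busemann function $b=b_v^{+}$. A standard fact for manifolds without conjugate points is that $b$ is $C^{2}$ (indeed $C^{1,1}$, and $C^2$ in the asymptotically harmonic setting where the horospheres are smooth), that $\lVert\nabla b\rVert\equiv 1$, and that along the geodesic $\gamma_v$ one has $\nabla b = -\dot\gamma_v$. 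Hence $b$ has no critical points and its gradient field is a unit field whose integral curves are the geodesics asymptotic to $\gamma_v$.

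Next I would write down the Bochner formula for $b$:
\begin{equation}\label{eq:bochner-buse}
\tfrac12\Delta\lVert\nabla b\rVert^{2}=\lVert\nabla^{2}b\rVert^{2}+\langle\nabla b,\nabla\Delta b\rangle+\Ric(\nabla b,\nabla b).
\end{equation}
Since $\lVert\nabla b\rVert\equiv 1$, the left-hand side vanishes. Since $\Delta b\equiv h$ is constant, the middle term $\langle\nabla b,\nabla\Delta b\rangle$ also vanishes. Therefore \eqref{eq:bochner-buse} collapses to
\begin{equation}\label{eq:ric-buse}
\Ric(\nabla b,\nabla b)=-\lVert\nabla^{2}b\rVert^{2}\le 0 .
\end{equation}
Evaluating \eqref{eq:ric-buse} at the point $p$, where $\nabla b(p)=-\dot\gamma_v(0)=-v$, gives $\Ric(v,v)=\Ric(-v,-v)\le 0$. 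As $p$ and $v$ were arbitrary, $\Ric\le 0$ everywhere on $\Tilde M$, hence on $M$.

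The only real subtlety — and the step I expect to be the main obstacle to a fully rigorous write-up — is the regularity and the structural properties of the Busemann function used above: that $b_v^{+}$ is twice differentiable, that $\lVert\nabla b\rVert\equiv 1$, and crucially that $\Delta b$ equals the constant $h$ at the point $p$ and not merely on the horospheres far out. In the asymptotically harmonic setting this is exactly the hypothesis that the mean curvature of all horospheres is the constant $h$, and $\Delta b$ is precisely (up to sign conventions) that mean curvature; the smoothness of $b$ in this class is standard (cf. the references \cite{H.08, Z, NPHA} already cited). One must also make sure the sign convention for $h\ge 0$ is compatible with writing $\Delta b^{\pm}_v\equiv h$ as stated in the definition, so that the middle term in Bochner genuinely drops out. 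Once these analytic facts about $b$ are granted, the argument is the three-line computation \eqref{eq:bochner-buse}–\eqref{eq:ric-buse}.
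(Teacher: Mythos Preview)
Your argument is correct and is essentially the same as the paper's: the paper traces the Riccati equation $L_t' + L_t^2 + R(\cdot,\gamma')\gamma' = 0$ for $L_t=\nabla^2 b_v^{+}$, and since $\operatorname{tr} L_t = \Delta b_v^{+}\equiv h$ is constant this yields $\operatorname{tr}(L_t^2)=-\Ric(\gamma',\gamma')$, which is exactly your Bochner identity \eqref{eq:ric-buse} restricted to the integral curves of $\nabla b$. The only difference is packaging---Riccati along geodesics versus the global Bochner formula for a unit-gradient function---and both hinge on the same two facts, $\lVert\nabla b\rVert\equiv 1$ and $\Delta b\equiv h$.
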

\begin{proof}
Let $\Tilde M$ be the universal covering space of $M$.
Let $L_{t} = {\nabla}^2 {b_{v}}^{+}$ denote the second fundamental form of horospheres $b_{v}^{-1}(t)$ of
$\Tilde M$.
Then $L_{t}$ satisfies the Riccati equation, that is for
 $x_{t} \in \{\gamma'(t)\}^{\perp}$,
 $$ {L_{t}}' (x_{t}) + {L_{t}}^2(x_{t}) +
 R(x_{t},\gamma'(t))\gamma'(t)= 0.$$
 Tracing the above equation, we obtain that
 $$ \mbox{tr} {L_{t}}^2 = -\mbox{Ricci}(\gamma'(t),\gamma'(t)).$$
 \end{proof}

 \begin{Pro}
     Suppose that $\nabla f(p)\neq 0$ for all $p\in M$, and scalar curvature of asymptotically harmonic manifold $M$ admitting a Ricci soliton is radially constant. Then $M$ is flat.
 \end{Pro}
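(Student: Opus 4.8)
The plan is to combine two facts established earlier in this section: that a Ricci soliton on an asymptotically harmonic manifold is a gradient soliton satisfying $S + \nabla^2 f = \lambda g$ (equation (\ref{e2})), and that $\nabla r = 2S(\nabla f)$. First I would use the hypothesis that the scalar curvature is radially constant. Here ``radially constant'' should mean that $r$ is constant along the integral curves of $\nabla f$ (equivalently along the gradient flow lines), i.e. $\nabla f(r) = g(\nabla r, \nabla f) = 0$. Feeding this into $\nabla r = 2S(\nabla f)$ and pairing with $\nabla f$ gives $S(\nabla f, \nabla f) = 0$. By Proposition \ref{non-pos}, the Ricci curvature of $M$ is non-positive, so $S$ is a negative semi-definite symmetric bilinear form; a negative semi-definite form that vanishes on the vector $\nabla f$ must in fact annihilate $\nabla f$ entirely, i.e. $S(\nabla f, \cdot) = 0$, hence $\nabla r = 2S(\nabla f) = 0$ and $r$ is a (global) constant.

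Next I would extract more from (\ref{e2}). Tracing (\ref{e2}) gives $r + \Delta f = \lambda n$; since $r$ is now constant, $\Delta f$ is constant. Then Bochner's formula applied to $f$,
\begin{equation*}
\tfrac{1}{2}\Delta \lVert \nabla f\rVert^2 = \lVert \nabla^2 f\rVert^2 + g(\nabla \Delta f, \nabla f) + S(\nabla f, \nabla f),
\end{equation*}
collapses: the middle term is zero because $\Delta f$ is constant, and the last term is zero by the step above. From (\ref{e2}) we have $\nabla^2 f = \lambda g - S$, and since $S(\nabla f) = 0$ we get $\nabla^2 f(\nabla f) = \lambda \nabla f$, which together with the hypothesis $\nabla f(p)\neq 0$ everywhere lets us analyze the flow of $\nabla f / \lVert\nabla f\rVert$. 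The key point will be to show $\lambda = 0$: integrating or using that asymptotically harmonic manifolds have no conjugate points rules out $\lambda > 0$ (as in Proposition \ref{eincl}), and for $\lambda < 0$ the eigenvalue relation $\nabla^2 f(\nabla f) = \lambda \nabla f$ together with non-positivity of $S$ forces $S$ to have the eigenvalue $\lambda < 0$ in the $\nabla f$-direction — but we just showed $S(\nabla f) = 0$, a contradiction. Hence $\lambda = 0$, so $S + \nabla^2 f = 0$.

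With $\lambda = 0$ we have $S = -\nabla^2 f$ and $r = 0 = \operatorname{tr}\nabla^2 f = \Delta f$. Then the Bochner identity forces $\lVert \nabla^2 f\rVert^2 = 0$, so $\nabla^2 f \equiv 0$ and therefore $S \equiv 0$: the manifold is Ricci flat. Finally, a complete simply connected Ricci flat asymptotically harmonic manifold is flat (this is exactly the input cited from \cite{NPHA} and used in Proposition \ref{eincl}); passing to the universal cover, $\Tilde M$ is flat, and hence $M$ is flat.

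The main obstacle is the handling of $\lambda$: pinning down precisely what ``radially constant'' means and confirming that the eigenvalue argument genuinely excludes $\lambda \neq 0$ given $\nabla f$ is nowhere vanishing. If instead one only knows $r$ is constant (not necessarily zero) one still gets $\Delta f$ constant and a nowhere-zero parallel-like behavior of $\nabla f$; the delicate case is $\lambda < 0$, where one must rule out that $\nabla f$ could be an eigenvector of $S$ with a negative eigenvalue — this is resolved by the identity $\nabla r = 2S(\nabla f)$ forcing $S(\nabla f) = 0$, which is the crux of the whole argument.
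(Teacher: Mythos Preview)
Your opening step is sound: from $\nabla_{\nabla f} r = 2S(\nabla f,\nabla f) = 0$ and the negative semi-definiteness of $S$ (Proposition \ref{non-pos}), you correctly deduce $S(\nabla f,\cdot) = 0$, hence $\nabla r = 2S(\nabla f) = 0$ and $r$ is globally constant. This is a valid observation.

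The argument then collapses. Your exclusion of $\lambda < 0$ is circular: the relation $\nabla^2 f(\nabla f) = \lambda \nabla f$ is \emph{derived from} $S(\nabla f) = 0$ via $\nabla^2 f = \lambda g - S$; it says nothing new, and certainly does not force $S$ to have eigenvalue $\lambda$ on $\nabla f$. Your exclusion of $\lambda > 0$ by invoking Proposition \ref{eincl} is also invalid, since that proposition assumes the Einstein condition $S = \lambda g$, which you have not established. Even granting $\lambda = 0$, your claim that ``the Bochner identity forces $\lVert\nabla^2 f\rVert^2 = 0$'' does not follow: Bochner only gives $\tfrac{1}{2}\Delta\lVert\nabla f\rVert^2 = \lVert\nabla^2 f\rVert^2 \geq 0$, so $\lVert\nabla f\rVert^2$ is subharmonic on a complete non-compact manifold, which by itself yields nothing. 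You also assert $r = 0$ when $\lambda = 0$ without justification; the trace only gives $r = -\Delta f$, both constant but not necessarily zero.

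The paper's route is entirely different and bypasses any $\lambda$-case analysis. From $S\bigl(\nabla f/\lVert\nabla f\rVert,\nabla f/\lVert\nabla f\rVert\bigr) = 0$ at every point it rules out $h > 0$, since asymptotically harmonic manifolds with $h > 0$ have \emph{strictly} negative Ricci curvature. With $h = 0$, every Busemann function $b_v^+$ is harmonic with $\lVert\nabla b_v^+\rVert \equiv 1$; Bochner applied to $b_v^+$ (now with vanishing left-hand side) and the Riccati equation then give $\nabla^2 b_v^+ = 0$ and flatness. The essential move you are missing is to pivot from $f$ to the Busemann functions once $h = 0$ is secured; working with $f$ alone does not close the argument.
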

 \begin{proof}
     $\nabla_{\nabla f}r=2S(\nabla f, \nabla f)=0$, by hypothesis.\\
     Therefore, 
     \begin{equation}\label{e3}
         S\bigg(\frac{\nabla f(p)}{\lVert \nabla f(p)\rVert},\frac{\nabla f(p)}{\lVert \nabla f(p)\rVert}\bigg)=0, \;\forall p\in M.     \end{equation}
     So $M$ can't be asymptotically harmonic manifold of constant $h>0$, as they have strictly negative Ricci curvature. Therefore, $M$ is asymptotically harmonic of constant $h=0$. Thus 
     any Busemann function on $\Tilde M$ satisfies
     $\Delta b^{+}_{v} = 0.$
      As (\ref{e3}) holds, all the radial Ricci curvatures are zero.\\
      Consider the Bochner's formula for $b^{+}_v$.
      $$ \frac{1}{2}\Delta\lVert\nabla {b_v^{+}}\rVert^2=\lVert \nabla^2 {b_v^{+}}\rVert^2+S(\nabla{b_v^{+}},\nabla b^{+}_v)$$ for all $v\in SM$.
      The foregoing equation gives $\lVert \nabla^2 {b_v}^{+}\rVert^2=0$, which implies $\nabla^2 b_v=0$, $\forall v\in SM$. Consequently, by Ricatti equation, $R(X, \nabla b_v)\nabla b_v=0$.
      Hence, $\Tilde{M}$  is flat and consequently $M$
      is a compact quotient of a flat space. 
 \end{proof}
 
\begin{Cor}\label{f} 
Suppose that $M$, satisfying the hypothesis of the above proposition, has constant scalar curvature and admits a Ricci soliton with $(\nabla f)(p) \neq 0$
for any $p \in M$. Then $M$ is flat.
\end{Cor}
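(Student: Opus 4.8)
The plan is to deduce this corollary directly from the proposition that precedes it, since the only new hypothesis is that the scalar curvature is constant rather than merely radially constant. First I would observe that a constant scalar curvature function is, a fortiori, radially constant: for any unit-speed geodesic and in particular along the integral curves of $\nabla f/\lVert\nabla f\rVert$, the derivative of $r$ vanishes. Thus all the hypotheses of the preceding proposition — namely $(\nabla f)(p)\neq 0$ for every $p\in M$, the manifold $M$ is asymptotically harmonic and admits a Ricci soliton, and $r$ is radially constant — are satisfied.

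Having verified the hypotheses, I would simply invoke the preceding proposition to conclude that $M$ is flat. For completeness I would recall the mechanism: from $\nabla_{\nabla f}r = 2S(\nabla f,\nabla f)$ and the vanishing of the left side, the radial Ricci curvature $S(\nabla f,\nabla f)$ vanishes identically; by Proposition \ref{non-pos} (Ricci curvature of an asymptotically harmonic manifold is non-positive), an asymptotically harmonic manifold of constant $h>0$ has strictly negative Ricci curvature, so we must have $h=0$; then every Busemann function on $\Tilde M$ is harmonic, and the Bochner formula applied to $b_v^{+}$ forces $\nabla^2 b_v^{+}=0$, whence the Riccati equation gives $R(\,\cdot\,,\nabla b_v^{+})\nabla b_v^{+}=0$ and $\Tilde M$ is flat; since $M$ is then a complete flat manifold and (by the compactness or finiteness considerations already in play) a compact quotient of flat space, $M$ is flat.

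I do not expect a genuine obstacle here: the corollary is essentially a specialization, and the work has already been done in the proposition. The only point requiring a word of care is making explicit that "constant scalar curvature" implies "radially constant scalar curvature" — this is immediate but should be stated so the logical dependence is transparent. One might also note that constancy of $r$ gives slightly more structural information (e.g. via $\lambda = r/n$ as in Proposition \ref{cl}), but this extra information is not needed for the flatness conclusion, so I would not pursue it. Accordingly the proof is short: verify the hypothesis, cite the proposition, done.

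\begin{proof}
Since $r$ is constant on $M$, in particular its derivative along the integral curves of $\nabla f/\lVert\nabla f\rVert$ vanishes, so $r$ is radially constant. As $(\nabla f)(p)\neq 0$ for every $p\in M$ and $M$ is an asymptotically harmonic manifold admitting a Ricci soliton, all the hypotheses of the preceding proposition are met. Therefore $M$ is flat.
\end{proof}
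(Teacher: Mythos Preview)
Your proposal is correct and follows essentially the same approach as the paper: the paper's proof simply notes that $\nabla_{\nabla f}r = 2S(\nabla f,\nabla f) = 0$ since $r$ is constant, and then invokes the preceding proposition. Your version says the same thing in slightly different words (constant $\Rightarrow$ radially constant), with some extra recapitulation of the proposition's mechanism that the paper omits.
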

\begin{proof}
    As $\nabla_{\nabla f}r=2S(\nabla f, \nabla f)=0$, the result follows from the above proposition.
\end{proof}

\begin{Thm}
Suppose that an asymptotically harmonic manifold $(M,g)$ admits a  Ricci soliton with 
$X=\nabla f$. Then (i) if $|X|$ attains maximum,
(ii) if for $u = |X|^2,~ |du| \in L^1(M)$, then 
the Ricci soliton is trivial. Consequently, 
$(M,g)$ is Einstein and asymptotically harmonic 
of constant $h = 0$ and also $M=N \times \mathbb{R}$.
$M$ is flat if it is homogeneous.
\end{Thm}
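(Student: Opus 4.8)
The plan is to prove that $u:=|X|^2=|\nabla f|^2$ is a subharmonic function on $M$ and then to use either hypothesis (i) or (ii) to force $u$ harmonic, which will make the Hessian of $f$ vanish. Since $M$ is (simply connected, complete and) asymptotically harmonic, the soliton equation is in gradient form $S+\nabla^2f=\lambda g$ as in (\ref{e2}). Tracing gives $\Delta f=n\lambda-r$, and combining the contracted second Bianchi identity with the soliton equation gives $\nabla r=2S(\nabla f)$, as already shown above; hence $\nabla(\Delta f)=-\nabla r=-2S(\nabla f)$. Substituting into Bochner's formula
\[
\tfrac12\Delta|\nabla f|^2=|\nabla^2f|^2+\langle\nabla f,\nabla(\Delta f)\rangle+S(\nabla f,\nabla f)
\]
yields
\[
\tfrac12\Delta u=|\nabla^2f|^2-S(\nabla f,\nabla f).
\]
By Proposition \ref{non-pos} the Ricci curvature of an asymptotically harmonic manifold is non-positive, so $-S(\nabla f,\nabla f)\ge0$ and therefore $\Delta u\ge0$: $u$ is subharmonic on the complete manifold $M$.

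Next I would split into the two cases. Under hypothesis (i), $u$ attains its maximum at an interior point (every point of $M$ is interior), so by the strong maximum principle $u$ is constant and $\Delta u\equiv0$. Under hypothesis (ii), $|du|\in L^1(M)$; by the coarea formula one may pick $r_k\to\infty$ with $\int_{\partial B_{r_k}}|\nabla u|\to0$, and then
\[
0\le\int_{B_{r_k}}\Delta u=\int_{\partial B_{r_k}}\langle\nabla u,\nu\rangle\le\int_{\partial B_{r_k}}|\nabla u|\longrightarrow0,
\]
so $\int_M\Delta u=0$ and, $\Delta u$ being non-negative, $\Delta u\equiv0$ (the classical Yau--Karp argument for $L^1$ gradients). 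In either case the identity $\tfrac12\Delta u=|\nabla^2f|^2-S(\nabla f,\nabla f)$ together with $S\le0$ forces $\nabla^2f\equiv0$ and $S(\nabla f,\nabla f)\equiv0$. Hence $X=\nabla f$ is parallel, $\mathfrak{L}_Xg=2\nabla^2f=0$, so the soliton is trivial, and the soliton equation gives $S=\lambda g$, i.e. $(M,g)$ is Einstein.

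Finally I would extract the rigidity statements. Since the soliton is non-trivial, $f$ is non-constant, so $\nabla f$ is a nowhere-vanishing parallel field, $|\nabla f|$ is a positive constant, and by the de Rham splitting $M=N\times\mathbb{R}$ with $X$ tangent to the $\mathbb{R}$-factor. The integral curves of $X/|X|$ are geodesic lines whose two Busemann functions are affine, so $\Delta b^{\pm}_v\equiv0$ along them; as $M$ is asymptotically harmonic this forces $h=0$. On the product $N\times\mathbb{R}$ the Ricci curvature in the $\mathbb{R}$-direction vanishes, so the Einstein constant is $\lambda=0$, $M$ is Ricci flat, and being simply connected, complete and asymptotically harmonic it is flat by \cite{NPHA}. (Alternatively, if $M$ is homogeneous, Theorem \ref{asyhar} says it is flat, rank one symmetric of non-compact type, or a nonsymmetric Damek--Ricci space, and the latter two have $h>0$, which contradicts $h=0$; so $M$ is flat.)

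I expect the main obstacle to be the function-theoretic step in case (ii): making the coarea/divergence-theorem argument rigorous on a complete non-compact manifold from the single assumption $|du|\in L^1(M)$ (choosing the exhausting radii along which $\partial B_r$ is rectifiable and the flux tends to $0$, and passing to the limit). The rest is essentially bookkeeping of the gradient soliton identities, Bochner's formula, and the input $S\le0$ from Proposition \ref{non-pos}, with the strong maximum principle in case (i) requiring nothing beyond connectedness of $M$.
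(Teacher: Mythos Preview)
Your proof is correct and follows essentially the same approach as the paper: both show $\tfrac12\Delta|X|^2=|\nabla X|^2-S(X,X)$, use $S\le 0$ from Proposition~\ref{non-pos} to get subharmonicity, invoke the maximum principle for (i) and Yau's $L^1$-gradient result for (ii) to force harmonicity, and conclude $\nabla^2f=0$, whence the splitting and $h=0$. The only differences are cosmetic---you derive the Bochner-type identity from scratch (the paper just cites \cite{Peter}) and you push the flatness conclusion slightly further via $\lambda=0$ and \cite{NPHA}, whereas the paper states flatness only under homogeneity.
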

\begin{proof}
Suppose that $(M,g)$ admits a non-trivial Ricci soliton with $X=\nabla f$. Then by \cite{Peter}, it satisfies $$\frac{1}{2}\Delta |X|^2=|\nabla X|^2-S(X,X).$$
As $(M,g)$ is asymptotically harmonic manifold, $S\leq0$ (Proposition \ref{non-pos}).
Thus $ u=|X|^2$ is a subharmonic function on $M$.
If $|X|$ attains maximum or if
$\int_M|du|<\infty$, then by Yau \cite{Y}, $u$ is a harmonic function on $M$. Therefore, $|\nabla X|^2=S(X,X)\leq0$, implies $\nabla f$ is a parallel vector field. Hence, $M=N\times \mathbb{R}$.
$M$ is asymptotically harmonic manifold with minimal horospheres. $M$ is flat if $M$ is homogeneous or if $M$ is harmonic and $f=b_v^-$ (Proposition \ref{eincl}).
\end{proof}

\vspace{0.1in}

\noindent
If a manifold admits a gradient Ricci soliton, then the scalar curvature satisfies  \cite{Peter}:
\begin{eqnarray}\label{s}
\frac{1}{2}(\Delta-D_{\nabla f})r&=&\frac{1}{2}\Delta_fr\nonumber\\
&=&-|S|^2+\lambda r\\
&=&-|S-\frac{1}{n}r_g|^2+r(\lambda-\frac{r}{n})\nonumber.
\end{eqnarray}
Clearly $\Delta_fr$ is an elliptic operator.

\begin{Thm}
    If $(M,g)$ is an asymptotically harmonic manifold admitting shrinking or steady soliton with
    $(\nabla f)(p) \neq 0$.  If the
scalar curvature attains the minimum, then it is flat.   \end{Thm}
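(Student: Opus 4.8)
The plan is to evaluate the scalar-curvature evolution equation (\ref{s}) at a point where $r$ attains its minimum and to combine this with the non-positivity of the Ricci curvature supplied by Proposition \ref{non-pos}. Throughout, recall that on an asymptotically harmonic manifold $S\le 0$, hence $r=\operatorname{tr}S\le 0$ everywhere, and that $\lambda\ge 0$ because the soliton is shrinking or steady; moreover the soliton is gradient, $X=\nabla f$, and satisfies (\ref{e2}).

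First I would choose $p\in M$ with $r(p)=\min_{M}r$, which exists by hypothesis. Since $M$ has no boundary, $p$ is an interior minimum of the smooth function $r$, so $\nabla r(p)=0$ and $\Delta r(p)\ge 0$; in particular $D_{\nabla f}r(p)=g(\nabla f,\nabla r)(p)=0$, whence $\Delta_f r(p)=\Delta r(p)\ge 0$. On the other hand (\ref{s}) gives $\frac{1}{2}\Delta_f r(p)=-|S|^2(p)+\lambda r(p)$, and the right-hand side is $\le 0$ since $|S|^2\ge 0$, $\lambda\ge 0$ and $r(p)\le 0$. Hence both sides vanish, so $|S|^2(p)=0$ and in particular $r(p)=\operatorname{tr}S(p)=0$.

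Since $r(p)=0$ is the global minimum of $r$ while $r\le 0$ on all of $M$, it follows that $r\equiv 0$. Substituting back into (\ref{s}) yields $0=-|S|^2$ everywhere, so $S\equiv 0$; that is, $M$ is Ricci flat. Consequently the universal cover $\Tilde M$ is complete, simply connected, Ricci flat and asymptotically harmonic, hence flat by the cited rigidity for Ricci flat asymptotically harmonic manifolds (\cite{NPHA}; cf.\ Proposition \ref{eincl}); therefore $M$ itself is flat.

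I do not anticipate a genuine obstacle: the deduction is immediate once (\ref{s}) and Proposition \ref{non-pos} are available, the only point requiring care being the elementary minimum-principle step at $p$, which is automatic here. (One may even avoid choosing $p$ explicitly: since $|S|^2\ge r^2/n$, (\ref{s}) gives $\Delta_f r\le 2r(\lambda-\frac{r}{n})\le 0$, so $r$ is $\Delta_f$-superharmonic and, attaining its minimum on the connected manifold $M$, must be constant by the strong minimum principle, after which one concludes as above. The non-vanishing hypothesis $(\nabla f)(p)\neq 0$ enters only to exclude the trivial soliton, which is already settled by Proposition \ref{eincl}.)
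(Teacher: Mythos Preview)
Your proof is correct and follows essentially the same route as the paper: both combine equation~(\ref{s}) with $S\le 0$ (Proposition~\ref{non-pos}) and $\lambda\ge 0$ to run a minimum-principle argument on $r$, and then invoke \cite{NPHA} once $S\equiv 0$. The paper applies the strong minimum principle globally (since $\Delta_f r\le 0$ and $r$ attains its minimum, $r$ is constant), which is exactly your parenthetical alternative; your main argument is a slightly more elementary pointwise variant that additionally exploits the a~priori bound $r\le 0$ to identify the minimum value as $0$ and thereby avoid citing the strong minimum principle.
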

\begin{proof}
Clearly $\Delta_fr$ is an elliptic operator.
And as $S \leq 0$, $r$ is a $f$-superharmonic 
function on $M$, if  $(M,g)$ admits shrinking or steady soliton by (\ref{s}). Hence, by the minimum principle, $r$  is a $f$-constant function on $M$.
Hence, by $(\ref{s})$ it follows that 
$|S|^2=\lambda r\leq0$, consequently $S \equiv 0$.
Therefore, $M$ is  Ricci flat and hence flat \cite{NPHA}.
\end{proof}

\begin{Pro}
    If $(M,g)$ is an asymptotically harmonic manifold admitting a Ricci soliton and if scalar curvature, $r$  is a $f$-harmonic function, then 
    $n\lambda\leq r\leq 0$. Consequently,
    $\lambda \leq 0$. If $\lambda =0$, then $\Tilde M$ is flat. And if $n\lambda\leq r$, then 
    $M$ is Einstein of constant $\lambda< 0$. The 
    conclusion of Proposition \ref{eincl} holds.
    \end{Pro}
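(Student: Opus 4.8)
The plan is to combine the formula $\tfrac{1}{2}\Delta_f r = -|S|^2 + \lambda r$ from \cite{Peter} (displayed in equation (\ref{s})) with the sign constraints we already have on asymptotically harmonic manifolds, namely $S \leq 0$ (Proposition \ref{non-pos}), hence $r \leq 0$. The hypothesis that $r$ is $f$-harmonic means $\Delta_f r = 0$, so (\ref{s}) immediately yields $|S|^2 = \lambda r$. Since $|S|^2 \geq 0$ and $r \leq 0$, this forces $\lambda \leq 0$, with equality $\lambda = 0$ precisely when $|S|^2 = 0$, i.e.\ $S \equiv 0$; in that case $M$ is Ricci flat and asymptotically harmonic, hence $\Tilde M$ is flat by \cite{NPHA}. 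This disposes of the first two assertions.

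For the lower bound on $r$: integrating the trace of the soliton equation (\ref{e2}) as in Proposition \ref{cl} gives $\lambda = r/n$ when $M$ is compact, but in general we should instead argue directly. Rewriting (\ref{s}) in the completed-square form $\tfrac{1}{2}\Delta_f r = -|S - \tfrac{1}{n} r g|^2 + r(\lambda - \tfrac{r}{n})$ and using $\Delta_f r = 0$, we get $|S - \tfrac{1}{n} r g|^2 = r(\lambda - \tfrac{r}{n})$. The left side is nonnegative, so $r(\lambda - \tfrac{r}{n}) \geq 0$; since $r \leq 0$ this means $\lambda - \tfrac{r}{n} \leq 0$ whenever $r < 0$, i.e.\ $n\lambda \leq r$. (When $r = 0$ both sides vanish and $\lambda = 0$ by the previous paragraph, which is consistent.) Thus in all cases $n\lambda \leq r \leq 0$.

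Finally, suppose $n\lambda \leq r$ holds with strict information forcing $\lambda < 0$: then $r(\lambda - \tfrac{r}{n}) \geq 0$ together with $r < 0$ and the nonnegativity of $|S - \tfrac{1}{n} r g|^2$ pins down $|S - \tfrac{1}{n} r g|^2 = r(\lambda-\tfrac r n)$; to conclude $M$ is Einstein one reads the hypothesis ``$n\lambda \leq r$'' in the paper as the borderline case $n\lambda = r$, which makes $r(\lambda - \tfrac{r}{n}) = 0$, hence $S = \tfrac{1}{n} r g$, i.e.\ $M$ is Einstein with $\lambda = r/n < 0$. Then the classification of Einstein asymptotically harmonic manifolds recorded in Proposition \ref{eincl} applies verbatim, giving the stated structural conclusions.

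The main obstacle is bookkeeping rather than depth: one must be careful about which steps need compactness (the integration argument of Proposition \ref{cl}) versus which hold in the general complete case (the pointwise square-completion argument), and one must interpret the slightly ambiguous hypothesis ``$n\lambda \leq r$'' in the proposition statement correctly as the equality case that triggers the Einstein conclusion. Modulo that, every ingredient — formula (\ref{s}), $S \leq 0$, the $f$-harmonicity hypothesis, and Proposition \ref{eincl} — is already available.
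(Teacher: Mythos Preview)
Your proposal is correct and follows essentially the same route as the paper: the paper's entire proof is the one line ``by (\ref{s}), $|S-\tfrac{r}{n}g|^2=r(\lambda-\tfrac{r}{n})$ and $r\leq 0$,'' which is exactly the completed-square argument you spell out. You have in fact supplied considerably more detail than the paper does, and your observation that the clause ``if $n\lambda\leq r$'' must be read as the equality case $n\lambda=r$ to force the Einstein conclusion is apt.
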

\begin{proof}
    The proof follows as in this case by (\ref{s}),
 $|S-\frac{r_g}{n}|^2=r(\lambda-\frac{r}{n})$ and $r\leq 0$.
\end{proof}

\noindent
{\bf Conclusion:} Our results generalize the results 
of \cite{NPHA} for harmonic manifolds which are asymptotically harmonic manifolds. The results obtained here exhibit the rigidity of asymptotically harmonic manifolds admitting a non-trivial Ricci soliton.

\section{Acknowledgements}
The corresponding author, Mr. Paritosh Ghosh, thanks UGC Junior Research Fellowship of India. The authors also like to thank Dr. Sumanjit Sarkar for his wishful help in this research.

\end{document}